\patchcmd{\@maketitle}{\LARGE \@title}{\LARGE\bfseries\@title}{}{}
\renewcommand{\@seccntformat}[1]{\csname the#1\endcsname.\quad}
\definecolor{darkblue}{rgb}{0,0,.5}
\def\th@plain{%
	\thm@notefont{}
	\itshape 
}
\def\th@definition{%
	\thm@notefont{}
	\normalfont 
}
\renewenvironment{proof}[1][\proofname]{\par
	\normalfont
	\topsep0\p@\@plus3\p@ \trivlist
	\item[\hskip\labelsep\itshape
	#1\@addpunct{.}]\ignorespaces
}{%
	\qed\endtrivlist
}
\newtheorem{theorem}{Theorem}[section]
\newtheorem{lemma}[theorem]{Lemma}
\newtheorem{proposition}[theorem]{Proposition}
\theoremstyle{definition}
\theoremstyle{definition}
\theoremstyle{definition}
\newtheorem{remark}[theorem]{Remark}
\theoremstyle{definition}
\newtheorem{algorithm}[theorem]{Algorithm}
\theoremstyle{definition}
\newtheorem{assumption}[theorem]{Assumption}
\newcommand{\dom}{\ensuremath{\operatorname{dom}}}
\newcommand{\prox}{\ensuremath{\operatorname{Prox}}}
\newcommand{\argmin}{\ensuremath{\operatorname*{argmin}}}
\newcommand{\epi}{\ensuremath{\operatorname{epi}}}
\newcommand{\dist}{\ensuremath{\operatorname{dist}}}
\newcounter{step}
\newcommand\step[1]{%
	\refstepcounter{step}	
	\vskip 0.25\baselineskip
	\ifx\hfuzz#1\hfuzz
		\item[~\(\triangleright\)~\textbf{Step~\arabic{step}.}]
	\else
		\item[~\(\triangleright\)~\textbf{Step~\arabic{step}}] (\texttt{#1})\textbf{.}%
	\fi
}
\begin{document}

\title{A proximal splitting algorithm for generalized DC programming with applications in signal recovery}

\author{
Tan Nhat Pham\thanks{Centre for New Energy Transition Research, Federation University Australia, Ballarat, VIC 3353, Australia.
E-mail: \texttt{pntan.iac@gmail.com}. ORCID: \href{https://orcid.org/0000-0002-0445-745X}{0000-0002-0445-745X}.}, 
~
Minh N. Dao\thanks{School of Science, RMIT University, Melbourne, VIC 3000, Australia.
E-mail: \texttt{minh.dao@rmit.edu.au}. ORCID: \href{https://orcid.org/0000-0002-8074-6675}{0000-0002-8074-6675}.
},
~
Nima Amjady\thanks{Centre for New Energy Transition Research, Federation University Australia, Ballarat, VIC 3353, Australia.
E-mail: \texttt{n.amjady@federation.edu.au}. ORCID: \href{https://orcid.org/0000-0003-1308-1738}{0000-0003-1308-1738}.},
~
Rakibuzzaman Shah\thanks{Centre for New Energy Transition Research, Federation University Australia, Ballarat, VIC 3353, Australia.
E-mail: \texttt{m.shah@federation.edu.au}. ORCID: \href{https://orcid.org/0000-0001-8314-0225}{0000-0001-8314-0225}.}
}

\date{April 28, 2025}

\maketitle

\begin{abstract}
The difference-of-convex (DC) program is {\color{black}an important model in} nonconvex optimization due to its structure, which encompasses a wide range of practical applications. In this paper, we aim to tackle a generalized class of DC programs, where the objective function is formed by summing a possibly nonsmooth nonconvex function and a differentiable nonconvex function with Lipschitz continuous gradient, and then subtracting a nonsmooth continuous convex function. We develop a proximal splitting algorithm that utilizes proximal evaluation for the concave part and Douglas--Rachford splitting for the remaining components. The algorithm guarantees subsequential convergence to a {\color{black}critical} point of the problem model. Under the widely used Kurdyka--{\L}ojasiewicz property, we establish global convergence of the full sequence of iterates and derive convergence rates for both the iterates and the objective function values, without assuming the concave part is differentiable. The performance of the proposed algorithm is tested on signal recovery problems with a nonconvex regularization term and exhibits competitive results compared to notable algorithms in the literature on both synthetic data and real-world data\footnote{MATLAB codes and data used in this work is available at: \url{https://github.com/nhattan214/Backward_Douglas_Rachford}}.
\end{abstract}

\noindent{\bf Keywords}: 
Composite optimization, 
DC program, 
Douglas--Rachford algorithm,
Fenchel conjugate,
signal recovery,
three-operator splitting.

\smallskip
\noindent{\bf Mathematics Subject Classification (MSC 2020):}
90C26,	
49M27,	
65K05.	

\section{Introduction}
Throughout this paper, the set of nonnegative integers is denoted by $\mathbb{N}$, the set of real numbers  is denoted by $\mathbb{R}$, the set of nonnegative real numbers is denoted by $\mathbb{R}_+$, and the set of the positive real numbers is denoted by $\mathbb{R}_{++}$. We use $\mathbb{R}^d$ to represent the $d$-dimensional Euclidean space with inner product $\langle \cdot, \cdot \rangle$ and Euclidean norm $\|\cdot\|$.

We consider the structured optimization problem
\begin{equation}\label{eq:P}
\min_{x\in \mathbb{R}^d} {\color{black}\mathbf{F}(x)} :=f(x)+h(x)-g(x), 
\tag{P}
\end{equation}
where $f\colon \mathbb{R}^d\to \mathbb{R}$ is a differentiable (possibly nonconvex) function whose gradient is Lipschitz continuous, $h\colon \mathbb{R}^d\to (-\infty, +\infty]$ is a lower semicontinuous function, and $g\colon \mathbb{R}^d \to \mathbb{R}$ is a continuous convex function (possibly nonsmooth). 

When all the functions are convex, problem \eqref{eq:P} becomes {\color{black} the structured problem called the \emph{generalized difference-of-convex} (DC) \emph{program} in \cite{Chuang2021}}, which reduces to the standard DC program introduced in the original work \cite{Tao1986} when {\color{black}$f\equiv 0$}. The generalized DC program encompasses a wide range of practical engineering problems, ranging from image processing, portfolio optimization, signal processing, to machine learning, as shown in \cite{Le_Thi2018-ht} and references therein. {\color{black} DC programs also have applications in operation research, such as finding continuous piecewise linear approximations of deterministic functions \cite{Kazda2024}, or risk-based robust statistical learning \cite{Liu2023OR}}. A classical algorithm to solve this class of problems is the \emph{difference-of-convex algorithm} (DCA) and its variants, as thoroughly reviewed in \cite{Le_Thi2018-ht}. At each iteration, the DCA replaces the concave part of the objective by a linear majorant and solves the resulting convex optimization problem. The choice of DC decomposition of the objective function heavily affects the complexity of the subproblems. {\color{black} There are also research works that aim at accelerating convergence of the DCA, notably the \textit{boosted DC algorithm} \cite{AragnArtacho2020}}. Proximal algorithm was also developed to solve the DC program, as introduced in \cite{PDCA_first}. This algorithm majorizes the concave part in the objective by a linear majorant in each iteration. A notable extended version of proximal algorithm for the DC program is the \textit{proximal difference-of-convex algorithm with extrapolation} (pDCAe) \cite{Wen2017}, in which the authors considered a class of problem having a similar structure to problem \eqref{eq:P} with all convex components. There are also other works on the DC program which relax the convexity of the terms in the objective function, such as those in \cite{An2016, Tan2023,vanAckooij2019,deOliveira2025}.

The DC program also attracts researchers to develop other types of splitting algorithms. The \emph{alternating direction method of multipliers}, or ADMM, is also a powerful algorithm since it splits the original problem into several subproblems with smaller scale. The ADMM is originally designed for sum of two functions, and generally the subproblems may not have closed-form solutions. Therefore, external solvers or numerical methods need to be used to solve the subproblem, resulting in higher computational complexity. To overcome this, Sun et al. \cite{Sun2017} developed a variant of the ADMM to solve the DC program, which incorporates Bregman distance to the subproblems, while replacing the concave part with a linear majorant (BADMM-DC). By doing so, the subproblems of the BADMM-DC are convex, hence they can be easily solved. Tu et al. \cite{Tu2019} proposed an improved version of the BADMM-DC by incorporating extrapolation techniques, and achieved better performance. Another splitting algorithm which attracts significant attention in the literature is the \emph{Douglas--Rachford} (DR) \emph{splitting} \cite{DR56}, originally designed for finding zeros in the sum of two maximally monotone operators. The DR scheme was further extended to what is known as the \emph{Davis--Yin splitting} \cite{Davis2017} to deal with the sum of three functions, one of which has Lipschitz continuous gradient. Based on this extension, the so-called \emph{unified DR algorithm} was proposed in \cite{Chuang2021} to solve the DC program, where the authors employed monotone operator theory to prove the convergence and required strong convexity of the summands as well as smoothness of the concave part. This leads to one drawback that although the unified DR algorithm can cover the classical DR algorithm, its convergence proof cannot cover that of the classical DR algorithm, {\color{black} which do not require the strong convexity of $f$ and $h$} \cite[{\color{black}Remark 3.1}]{Chuang2021}. Despite the fact that this work has shown a promising splitting scheme to solve the DC program, the assumptions required for the convergence are still strong.

A common approach to establish the convergence in nonconvex setting is to use the \emph{Kurdyka--Łojasiewicz} (KL) \emph{property} \cite{Kurdyka1998,Loja63}, as shown in the framework \cite{Attouch2011}. This famous framework particularly states that if a suitable \emph{Lyapunov function} (also known as a merit or potential function), with critical points related to those of the original problem, satisfies the KL property, then the entire sequence generated by the algorithm can be shown to converge to a critical point, rather than just a subsequence. This has been extensively used in many previous works, including those dealing with the DC program such as \cite{An2016,Wen2017,Tan2023,Sun2017}. One worth-noting point from these works is that, in order to establish the convergence of the entire sequence under the KL property, they need an additional assumption that the concave part has Lipschitz continuous gradient, which may not be satisfied in practice. To overcome this assumption, the authors in \cite{Banert2018} evaluate the concave part via proximal steps instead of using its subgradients, thereby establishing the full sequential convergence without the additional assumption. It is also discussed in \cite{Banert2018} that using proximal steps in an algorithm offers several advantages over using subgradients, such as better convergence rates and avoiding the issue of the algorithm getting stuck when the subdifferential at a certain point is a non-singleton set.

In this work, we develop a proximal splitting algorithm that incorporates the Fenchel conjugate to handle the concave part in solving problem \eqref{eq:P}. The convexity requirement in the problem \eqref{eq:P} is weaker than those used in the literature for the general DC program, making it a more general class of optimization problems. 
{\color{black} Compared to the closely related work in \cite{Chuang2021}, our convergence analysis does not rely on monotone operator theory, and hence we can bypass the strong convexity of $f$ and $h$ as well as the smoothness of $g$. In particular, our work only requires $g$ to be convex but possibly nonsmooth, while there is no convexity requirement for $f$ and $h$}. We construct a new Lyapunov function to prove that the generated sequence is bounded and each of its cluster points yields a {\color{black}critical point} of problem \eqref{eq:P}. When the Lyapunov function satisfies the KL property, the convergence of the full sequence is established, and the convergence rates of this sequence as well as those of the objective function values are derived without any additional assumptions on the concave part. Numerical experiments performed on signal recovery problems show that the proposed algorithm is very competitive to notable algorithms in the recent literature, on both synthetic data and real data from two public datasets.

The rest of the paper is organized as follows. In Section~\ref{sec:preliminaries}, the preliminary materials used in this work are presented. In Section~\ref{sec:algorithm}, we introduce our algorithm and prove the subsequential and full sequential convergence of the proposed algorithm under suitable assumptions. Section~\ref{sec:numerical_result} presents the numerical results of the algorithm. Finally, the conclusions are given in Section~\ref{sec:conclusion}.

\section{Preliminaries}
\label{sec:preliminaries}

Let $f\colon \mathbb{R}^d\to \left[-\infty,+\infty\right]$. The \emph{domain} of $f$ is $\dom f :=\{x\in \mathbb{R}^d: f(x) <+\infty\}$ and the \emph{epigraph} of $f$ is $\epi f := \{(x,\rho)\in \mathbb{R}^d\times \mathbb{R}: f(x)\leq \rho\}$. The function $f$ is said to be \emph{proper} if $\dom f \neq \varnothing$ and it never takes the value $-\infty$, \emph{coercive} if $f(x)\to +\infty$ as $\|x\|\to +\infty$, \emph{lower semicontinuous} if its epigraph is a closed set, and \emph{convex} if its epigraph is a convex set. We say that $f$ is $\rho$-weakly convex if $\rho\in \mathbb{R}_+$ and $f +\frac{\rho}{2}\|\cdot\|^2$ is convex. 

Let $x\in \mathbb{R}^d$ with $|f(x)| <+\infty$. The \emph{regular subdifferential} of $f$ at $x$ is defined by
\begin{equation*}
\widehat{\partial} f(x) :=\left\{x^*\in \mathbb{R}^d:\; \liminf_{y\to x}\frac{f(y)-f(x)-\langle x^*,y-x\rangle}{\|y-x\|}\geq 0\right\}
\end{equation*}
and the \emph{limiting subdifferential} of $f$ at $x$ is defined by
\begin{equation*}
\partial f(x) :=\left\{x^*\in \mathbb{R}^d:\; \exists x_n\stackrel{f}{\to}x,\; x_n^*\to x^* \text{~~with~~} x_n^*\in \widehat{\partial} f(x_n)\right\},
\end{equation*}
where the notation $y\stackrel{f}{\to}x$ means $y\to x$ with $f(y)\to f(x)$. In the case where $\lvert f(x) \rvert =+\infty$, both regular subdifferential and limiting subdifferential of $f$ at $x$ are defined to be the empty set. The \emph{domain} of $\partial f$ is given by $\dom \partial f :=\{x\in \mathbb{R}^d: \partial f(x) \neq \varnothing\}$. It can be directly verified from the definition that the limiting subdifferential has the \emph{robustness property}
\begin{equation*}
\partial f(x) =\left\{x^*\in \mathbb{R}^d:\; \exists y_n \stackrel{f}{\to}x,\; y_n^*\to x^* \text{~~with~~} y_n^*\in \partial f(y_n)\right\}.
\end{equation*}


Given a function $f\colon \mathbb{R}^d \to [-\infty, +\infty]$, the \emph{Fenchel conjugate} $f^*\colon \mathbb{R}^d\to [-\infty, +\infty]$ of $f$ is defined by
\begin{align*}
f^*(v) =\sup_{x\in \mathbb{R}^d} \left(\langle v,x\rangle -f(x)\right).
\end{align*}
We recall some useful properties of the Fenchel conjugate in the following proposition.
\begin{proposition}
\label{p:conj}
Let $f\colon \mathbb{R}^d\to (-\infty, +\infty]$ be proper and let $x,v \in \mathbb{R}^d$. Then the following hold:
\begin{enumerate}
\item\label{p:conj_FY} 
$f^*$ is a proper lower semicontinuous convex function and $f(x) +f^*(v) \geq \langle x, v\rangle$. 
\item\label{p:conj_cvx} 
If $f$ is lower semicontinuous and convex, then $v\in \partial f(x)$ $\iff$ $f(x) +f^*(v) = \langle x, v\rangle$ $\iff$ $x \in \partial f^*(v)$. 
\end{enumerate}
\end{proposition}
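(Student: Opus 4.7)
The plan is to prove part (i) directly from the definition and to prove part (ii) by chaining the classical Fenchel--Young equality with the Fenchel--Moreau biconjugate theorem.

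For part (i), I would first establish the Fenchel--Young inequality by inspection: for any fixed $x,v\in\mathbb{R}^d$, the supremum defining $f^*(v)$ is at least the value attained at $y=x$, i.e.\ $f^*(v)\geq \langle v,x\rangle - f(x)$, which rearranges to $f(x)+f^*(v)\geq\langle x,v\rangle$. For lower semicontinuity and convexity of $f^*$, I would observe that $f^*$ is the pointwise supremum of the affine functions $v\mapsto \langle v,x\rangle - f(x)$ indexed by $x\in\dom f$; each such function is continuous (hence lsc) and convex, and pointwise suprema preserve both properties. Properness then follows from the properness of $f$: picking any $x_0\in\dom f$ gives $f^*(v)\geq \langle v,x_0\rangle - f(x_0)>-\infty$ for every $v$, so $f^*$ never takes $-\infty$, and the additional hypothesis in (ii) (or bounded-below-by-an-affine-function control inherent to the convex proper lsc case) ensures $f^*\not\equiv +\infty$.

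For the first equivalence in part (ii), since $f$ is proper lsc convex, the limiting subdifferential coincides with the convex subdifferential, so $v\in\partial f(x)$ is equivalent to the subgradient inequality $f(y)\geq f(x)+\langle v,y-x\rangle$ for all $y\in\mathbb{R}^d$. Rearranging gives $\langle v,y\rangle - f(y)\leq \langle v,x\rangle - f(x)$ for every $y$; taking the supremum over $y$ yields $f^*(v)\leq \langle x,v\rangle - f(x)$, which combined with the Fenchel--Young inequality from (i) produces the equality $f(x)+f^*(v)=\langle x,v\rangle$. Conversely, if this equality holds, then for any $y$ the Fenchel--Young inequality $f(y)+f^*(v)\geq \langle v,y\rangle$ gives $f(y)\geq \langle v,y\rangle - f^*(v) = f(x)+\langle v,y-x\rangle$, so $v\in\partial f(x)$.

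For the second equivalence, I would invoke the Fenchel--Moreau theorem, which applies because $f$ is proper lsc convex, to obtain $f^{**}=f$. Applying the first equivalence to the proper lsc convex function $f^*$ in place of $f$ then yields $x\in\partial f^*(v) \iff f^*(v)+f^{**}(x)=\langle v,x\rangle \iff f^*(v)+f(x)=\langle v,x\rangle$, which is exactly the middle condition, closing the chain of equivalences. There is no substantive obstacle in this proof: the only nontrivial ingredient is the biconjugate identity $f^{**}=f$, which is a standard convex-analysis fact and will simply be cited rather than reproved.
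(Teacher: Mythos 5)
Your proposal is correct, but it takes a different route from the paper only in the sense that the paper offers no argument at all: part \ref{p:conj_FY} is dispatched by citing \cite[Propositions~13.13 and 13.15]{Bauschke2017_book} and part \ref{p:conj_cvx} by citing \cite[Theorem~16.29]{Bauschke2017_book}. What you have written out is essentially the standard proof that sits behind those citations: Fenchel--Young by evaluating the supremum at $y=x$, lower semicontinuity and convexity of $f^*$ as a pointwise supremum of affine functions, the subgradient-inequality characterization for the first equivalence in \ref{p:conj_cvx}, and Fenchel--Moreau ($f^{**}=f$) to close the chain. One point deserves emphasis, and you handled it more carefully than the paper does: under the bare hypothesis that $f$ is proper, $f^*$ need \emph{not} be proper --- for $f(x)=-\|x\|^2$, which is proper, one has $f^*\equiv+\infty$. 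Properness of $f^*$ requires $f$ to admit an affine minorant, which is automatic for proper lsc convex $f$ (and for the weakly convex, continuous, or coercive functions actually conjugated in the paper, namely $g$ and $g^*$), so your hedge about needing ``bounded-below-by-an-affine-function control'' is exactly right; strictly speaking the paper's statement of \ref{p:conj_FY} is slightly stronger than what holds in full generality, though this is harmless for every use made of it later. The rest of your argument is sound: in the first equivalence of \ref{p:conj_cvx} the identification of the limiting subdifferential with the convex subdifferential for proper lsc convex functions is the only implicit ingredient, and applying the equivalence to $f^*$ together with $f^{**}=f$ gives the second equivalence exactly as you describe.
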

\begin{proof}
\ref{p:conj_FY}: This follows from \cite[Propositions~13.13 and 13.15]{Bauschke2017_book}.

\ref{p:conj_cvx}: This follows from \cite[Theorem~16.29]{Bauschke2017_book}.
\end{proof}

Finally, we recall the concept of the well celebrated Kurdyka--Łojasiewicz (KL) property. A proper lower semicontinuous function $f \colon \mathbb{R}^d\to \left(-\infty, +\infty\right]$ satisfies the \emph{KL property} at $\overline{x} \in \dom \partial f$ if there are $\eta \in (0, +\infty]$, a neighborhood $V$ of $\overline{x}$, and a continuous concave function $\varphi: \left[0, \eta\right) \to \mathbb{R}_+$ such that $\varphi$ is continuously differentiable with $\varphi' > 0$ on $(0, \eta)$, $\varphi(0) = 0$, and, for all $x \in V$ with $
f(\overline{x}) < f(x) < f(\overline{x}) + \eta$, 
\begin{equation*}
\varphi'(f(x) -f(\overline{x})) \dist(0, \partial f(x)) \geq 1.   
\end{equation*}
The function $f$ is a \emph{KL function} if it satisfies the KL property at any point in $\dom \partial f$. If $f$ satisfies the KL property at $\overline{x} \in \dom \partial f$, in which the corresponding function $\varphi$ can be chosen as $\varphi(t) = c t ^{1 - \lambda}$ for some $c \in \mathbb{R}_{++}$ and $\lambda \in [0, 1)$, then $f$ is said to satisfy the \emph{KL property at $\overline{x}$ with exponent $\lambda$}. We say that $f$ is a \emph{KL function with exponent $\lambda$} if it is a KL function and has the same exponent $\lambda$ at any $x \in \dom \partial f$.

\section{Proposed algorithm and convergence analysis}
\label{sec:algorithm}

In this section, we present a splitting algorithm for solving problem~\eqref{eq:P} and perform the convergence analysis. We start by specifying the standing assumptions underlying our work.

\begin{assumption}[Standing assumptions]
\label{a:standing}
~
\begin{enumerate}
\item\label{a:standing_f} 
$f\colon \mathbb{R}^d\to \mathbb{R}$ is a differentiable $\rho$-weakly convex function with $\ell$-Lipschitz continuous gradient.
\item
$g\colon \mathbb{R}^d\to \mathbb{R}$ is a continuous convex function.
\item
$h\colon \mathbb{R}^d\to (-\infty, +\infty]$ is a lower semicontinuous function.
\end{enumerate} 
\end{assumption}

Regarding Assumption~\ref{a:standing}\ref{a:standing_f}, it is worth noting that $f$ is $\ell$-weakly convex as soon as it is a differentiable function with $\ell$-Lipschitz continuous gradient. We now introduce our algorithm, called \emph{backward-Douglas--Rachford algorithm} (BDR), to {\color{black}find a \emph{critical point} $x \in \mathbb{R}^d$ of \eqref{eq:P} in the sense that}
\begin{align*}
\color{black}0\in \nabla f(x) +\partial h(x) -\partial g(x).   
\end{align*}

\begin{tcolorbox}[
	left=0pt,right=0pt,top=0pt,bottom=0pt,
	colback=blue!10!white, colframe=blue!50!white,
  	boxrule=0.2pt,
  	breakable]
\begin{algorithm}[Backward-Douglas--Rachford algorithm (BDR)]
\label{algo:CDR}
\step{}
Choose initial points $y_0, z_0, w_0\in \mathbb{R}^d$ and set $n =0$. Let $\gamma \in \mathbb{R}_{++}$, $\tau\in \mathbb{R}_+$, and $\nu \in (0,2)$.

\step{}\label{step:main}
Compute
\begin{align*}
x_{n+1} &\in \argmin_{x\in \mathbb{R}^d} \left(f(x) +\frac{1}{2\gamma}\|x -y_n\|^2\right),\\
w_{n+1} &= \argmin_{w\in \mathbb{R}^d} \left(g^*(w) -\langle w, z_{n}\rangle +\frac{\tau}{2}\|w -w_n\|^2\right),\\
z_{n+1} &\in \argmin_{z\in \mathbb{R}^d} \left(h(z) +\frac{1}{2\gamma}\|z -(2x_{n+1} -y_n +\gamma w_{n+1})\|^2\right),\\
y_{n+1} &= y_n +\nu(z_{n+1} -x_{n+1}).
\end{align*}

\step{}
If a termination criterion is not met, set $n =n+1$ and go to Step~\ref{step:main}.
\end{algorithm}
\end{tcolorbox}

\begin{remark}\label{remark:comment} 
Some comments on Algorithm~\ref{algo:CDR} are as follows.
\begin{enumerate}
\item \label{remark:comment_i}
If $\tau >0$, then the updating schemes in Step~\ref{step:main} can be written as
\begin{align*}
\begin{cases}
x_{n+1} &\in \prox_{\gamma f}(y_n),\\
w_{n+1} &= \prox_{\frac{1}{\tau}g^*}\left(w_n +\frac{1}{\tau}z_n\right),\\
z_{n+1} &\in \prox_{\gamma h}(2x_{n+1} -y_n +\gamma w_{n+1}),\\
y_{n+1} &= y_n +\nu(z_{n+1} -x_{n+1}),  
\end{cases}
\end{align*}
where $\prox_{\gamma \phi}$ is the proximity operator of a proper function $\phi\colon \mathbb{R}^d\to \left(-\infty,+\infty\right]$ with parameter $\gamma\in \mathbb{R}_{++}$ defined by
\begin{align*}
\prox_{\gamma \phi}(x) =\argmin_{y\in \mathbb{R}^d}\left(\phi(y) +\frac{1}{2\gamma}\|y -x\|^2\right).
\end{align*}
Furthermore, by Moreau's decomposition \cite[Theorem~14.3(ii)]{Bauschke2017_book},
\begin{align}\label{eq:Moreau}
w_{n+1} = \frac{1}{\tau}(\tau w_n +z_n) -\frac{1}{\tau}\prox_{\tau g}\left(\tau w_n +z_n\right).  
\end{align}
The $w$-update generates a sequence along the convex conjugate of $g$, which can be interpreted as a dual sequence. Therefore, the BDR can also be viewed as a primal-dual type algorithm.

{\color{black}As this algorithm is a proximal type one, it is a standard assumption that $f$, $g$, and $h$ are prox-friendly functions, so that the updating step can be done efficiently. In the case that they are not prox-friendly, bundle methods can be a good alternative since they might not require this prox-friendly assumption; see, e.g., \cite{bundle_DC}.}

\item 
When $g\equiv 0$, Algorithm~\ref{algo:CDR} becomes the relaxed DR algorithm which reduces to the classical DR algorithm if $\nu =1$. 
\end{enumerate}
\end{remark}

The following lemma will be utilized multiple times in our analysis. 

\begin{lemma}
\label{l:Rela}
Suppose that $f$ is differentiable with $\ell$-Lipschitz continuous gradient. Let $(x_n, y_n, z_n, w_n)_{n\in \mathbb{N}^*}$ be a sequence generated by Algorithm~\ref{algo:CDR}. Then, for all $n\in \mathbb{N}$, the following hold:
\begin{enumerate}
\item\label{l:Rela_y}
$y_n = x_{n+1} +\gamma \nabla f(x_{n+1})$.
\item\label{l:Rela_z} 
$z_n \in \partial g^*(w_{n+1}) +\tau(w_{n+1} -w_n)$.
\item\label{l:Rela_w}
$w_{n+1} \in \partial h(z_{n+1}) -\frac{1}{\gamma}(x_{n+1} -z_{n+1}) -\frac{1}{\gamma}(x_{n+1} -y_n)$.
\item\label{l:Rela_yy}
$\|y_{n+1} -y_n\|\leq (1 +\gamma\ell)\|x_{n+2} -x_{n+1}\|$.
\item\label{l:Rela_zz} 
$\|z_{n+2} -z_{n+1}\|\leq \frac{1 +\gamma\ell}{\nu}\|x_{n+3} -x_{n+2}\| +\left(1 +\frac{1 +\gamma\ell}{\nu}\right)\|x_{n+2} -x_{n+1}\|$.
\end{enumerate}    
\end{lemma}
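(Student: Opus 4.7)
The plan is to derive items (i)--(iii) from the Fermat rule applied to the three $\argmin$-subproblems in Step~\ref{step:main}, and then to obtain the bounds (iv)--(v) by elementary triangle-inequality manipulations that combine (i) with the relaxation update $y_{n+1}=y_n+\nu(z_{n+1}-x_{n+1})$.

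For (i), I would apply the Fermat rule to the $x$-subproblem: since $f$ is differentiable and the quadratic $\tfrac{1}{2\gamma}\|\cdot-y_n\|^2$ is $C^1$, the limiting-subdifferential sum rule collapses to gradients and yields $0=\nabla f(x_{n+1})+\tfrac{1}{\gamma}(x_{n+1}-y_n)$, which rearranges to (i). For (ii), apply the Fermat rule to the $w$-subproblem; here $g^\ast$ is proper lsc convex by Proposition~\ref{p:conj}\ref{p:conj_FY} and the quadratic term is $C^1$, so $0\in\partial g^\ast(w_{n+1})-z_n+\tau(w_{n+1}-w_n)$, which is (ii). For (iii), apply the Fermat rule to the $z$-subproblem; although $h$ is only proper lsc, the sum rule for the limiting subdifferential with a smooth summand applies and gives
\begin{equation*}
0\in\partial h(z_{n+1})+\tfrac{1}{\gamma}\bigl(z_{n+1}-2x_{n+1}+y_n-\gamma w_{n+1}\bigr),
\end{equation*}
from which (iii) follows after splitting $z_{n+1}-2x_{n+1}+y_n=-(x_{n+1}-z_{n+1})-(x_{n+1}-y_n)$ and moving $w_{n+1}$ to the other side.

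For (iv), I would use (i) to rewrite $y_{n+1}-y_n=(x_{n+2}-x_{n+1})+\gamma(\nabla f(x_{n+2})-\nabla f(x_{n+1}))$, then invoke the triangle inequality together with the $\ell$-Lipschitz continuity of $\nabla f$. For (v), the relaxation update rearranges to $z_{n+1}=x_{n+1}+\tfrac{1}{\nu}(y_{n+1}-y_n)$; taking the difference of the $(n{+}2)$th and $(n{+}1)$th instances, applying the triangle inequality, and then substituting the bound from (iv) at indices $n$ and $n+1$ produces (v). None of the steps is technically deep: the only point requiring slight care is the invocation of the limiting-subdifferential sum rule for the nonconvex $h$ in (iii), but this is standard since the added summand is $C^1$. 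I do not expect a serious obstacle; the result is essentially a bookkeeping lemma that packages the optimality conditions of BDR for use later in the convergence analysis.
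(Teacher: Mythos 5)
Your proposal is correct and follows essentially the same route as the paper: items (i)--(iii) come from the first-order optimality conditions of the three subproblems, (iv) from (i) plus the Lipschitz continuity of $\nabla f$, and (v) from differencing the relaxation update $\nu(z_{n+2}-z_{n+1})=(y_{n+2}-y_{n+1})-(y_{n+1}-y_n)+\nu(x_{n+2}-x_{n+1})$ and invoking (iv) at two consecutive indices. No gaps.
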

\begin{proof}
Let $n\in \mathbb{N}$. The optimality conditions for the subproblems in Step~\ref{step:main} of Algorithm~\ref{algo:CDR} are 
\begin{align*}
0 &= \nabla f(x_{n+1}) +\frac{1}{\gamma}(x_{n+1} -y_n),\\
0 &\in {\color{black}\partial} g^*(w_{n+1}) -z_{n} +\tau(w_{n+1} -w_n),\\
0 &\in {\color{black}\partial} h(z_{n+1}) +\frac{1}{\gamma}(z_{n+1} -(2x_{n+1} -y_n +\gamma w_{n+1}))\\
&= {\color{black}\partial} h(z_{n+1}) -\frac{1}{\gamma}(x_{n+1} -z_{n+1}) -\frac{1}{\gamma}(x_{n+1} -y_n) -w_{n+1},
\end{align*}
which imply \ref{l:Rela_y}, \ref{l:Rela_z}, and \ref{l:Rela_w}.

Now, it follows from \ref{l:Rela_y} and the Lipschitz continuity of $\nabla f$ that
\begin{align*}
\|y_{n+1} -y_n\|\leq \|x_{n+2} -x_{n+1}\| +\gamma\|\nabla f(x_{n+2}) -\nabla f(x_{n+1}))\|\leq (1 +\gamma\ell)\|x_{n+2} -x_{n+1}\|,
\end{align*}
and we obtain \ref{l:Rela_yy}. Next, we derive from the updating step of $y_{n+1}$ that $\nu(z_{n+2} -z_{n+1}) =(y_{n+2} -y_{n+1}) -(y_{n+1} -y_n) +\nu(x_{n+2} -x_{n+1})$. By combining with \ref{l:Rela_yy}, 
\begin{align*}
\nu\|z_{n+2} -z_{n+1}\| &\leq \|y_{n+2} -y_{n+1}\| +\|y_{n+1} -y_n\| +\nu\|x_{n+2} -x_{n+1}\| \\
&\leq (1 +\gamma\ell)\|x_{n+3} -x_{n+2}\| +(1 +\gamma\ell)\|x_{n+2} -x_{n+1}\| +\nu\|x_{n+2} -x_{n+1}\|,
\end{align*}
which implies \ref{l:Rela_zz}.
\end{proof}

The convergence analysis of the BDR will be based on the Lyapunov function
\begin{align*}
\mathcal{F}(x,y,z,w) =f(x) +h(z) +g^*(w) -\langle w, z\rangle +\frac{1}{2\gamma}\|x -y\|^2 -\frac{1}{2\gamma}\|y -z\|^2 +\frac{1 -\nu}{\gamma}\|x -z\|^2.
\end{align*}

\begin{theorem}[Subsequential convergence]
\label{theorem:sub_convergence} 
Suppose that Assumption~\ref{a:standing} holds and that 
\begin{align*}
\gamma < \overline{\gamma}:=\frac{-\nu\rho +\sqrt{\nu^2\rho^2 +8(2 -\nu)\ell^2}}{4\ell^2},
\end{align*}
where we adopt the convention that the right-hand side becomes $+\infty$ when $\ell =0$. Let $(x_n, y_n, z_n, w_n)_{n\in \mathbb{N}^*}$ be a sequence generated by Algorithm~\ref{algo:CDR}. Then the following hold: 
\begin{enumerate}
\item\label{sub_convergence_descend} 
For all $n\in \mathbb{N}^*$, 
\begin{align}\label{eq:decrease}
\mathcal{F}(x_{n+1},y_{n+1},z_{n+1},w_{n+1}) 
+\frac{\delta}{2}\|x_{n+1} -x_n\|^2 +\frac{\tau}{2}\|w_{n+1} -w_n\|^2 \leq \mathcal{F}(x_{n},y_{n},z_{n},w_{n}),
\end{align}
where $\delta :=\frac{2 -\nu -\nu\rho\gamma -2\ell^2\gamma^2}{\nu\gamma}$. Consequently, the sequence $(\mathcal{F}(x_{n},y_{n},z_{n},w_{n}))_{n\in\mathbb{N}^*}$ is nonincreasing.

\item\label{bounded_of_sequence} 
Suppose that {\color{black}$\mathbf{F}$} is coercive. Then the sequence $(x_n,y_n,z_n,w_n)_{n\in \mathbb{N}^*}$ is bounded and $\|x_{n+1} -x_n\| \to 0$, $\|y_{n+1} -y_n\| \to 0$, $\|z_{n+1} -z_n\| \to 0$,  and $\tau\|w_{n+1} -w_n\| \to 0$ as $n \to +\infty$. Moreover, for every cluster point  $(\overline{x},\overline{y},\overline{z},\overline{w})$, it holds that  
\begin{align*}
&\overline{x} =\overline{z}, \\ 
&0\in \nabla f(\overline{z}) +\partial h(\overline{z}) -\partial g(\overline{z}), \text{~and} \\ 
&\lim_{n\to+\infty} {\color{black} \mathbf{F}(z_n)} =\lim_{n\to+\infty} \mathcal{F}(x_n,y_n,z_n,w_n) =\mathcal{F}(\overline{x},\overline{y},\overline{z},\overline{w}) ={\color{black}\mathbf{F}(\overline{z})}.
\end{align*}
\end{enumerate}
\end{theorem}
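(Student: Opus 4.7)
\textit{Part (i).} My plan is to derive descent inequalities from each of the three subproblems and add them up. Since $\gamma<\overline{\gamma}$ forces $\gamma\rho<1$, the $x$-subproblem's objective is $\bigl(\tfrac{1}{\gamma}-\rho\bigr)$-strongly convex, so testing its strong convexity inequality at $x=x_n$ yields $f(x_{n+1})+\tfrac{1}{2\gamma}\|x_{n+1}-y_n\|^2\leq f(x_n)+\tfrac{1}{2\gamma}\|x_n-y_n\|^2-\tfrac{1-\gamma\rho}{2\gamma}\|x_{n+1}-x_n\|^2$. Analogously, $\tau$-strong convexity of the $w$-subproblem delivers $g^*(w_{n+1})-\langle w_{n+1},z_n\rangle\leq g^*(w_n)-\langle w_n,z_n\rangle-\tau\|w_{n+1}-w_n\|^2$. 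For the $z$-subproblem only minimality is available (since $h$ is merely lower semicontinuous); testing against $z=z_n$ and expanding $\|z_n-u_n\|^2-\|z_{n+1}-u_n\|^2$ via $\|a\|^2-\|b\|^2=\|a-b\|^2+2\langle a-b,b\rangle$ (with $u_n:=2x_{n+1}-y_n+\gamma w_{n+1}$) supplies the third inequality. Summing and reorganising with the help of the $y$-update $y_{n+1}=y_n+\nu(z_{n+1}-x_{n+1})$ together with $y_n-x_{n+1}=\gamma\nabla f(x_{n+1})$ from Lemma~\ref{l:Rela}\ref{l:Rela_y} (and the Lipschitz bound $\|\nabla f(x_{n+1})-\nabla f(x_n)\|\leq\ell\|x_{n+1}-x_n\|$) should collapse the quadratic cross-terms into exactly $\mathcal{F}(x_{n+1},y_{n+1},z_{n+1},w_{n+1})-\mathcal{F}(x_n,y_n,z_n,w_n)$, leaving residual $\tfrac{\delta}{2}\|x_{n+1}-x_n\|^2+\tfrac{\tau}{2}\|w_{n+1}-w_n\|^2$; the hypothesis $\gamma<\overline{\gamma}$ is precisely $\delta>0$.

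\textit{Part (ii).} Write $\mathcal{F}_n:=\mathcal{F}(x_n,y_n,z_n,w_n)$, which is nonincreasing by (i). To obtain boundedness, I combine Fenchel--Young ($g^*(w)-\langle w,z\rangle\geq -g(z)$) with the Lipschitz descent inequality $f(x)-f(z)\geq\langle\nabla f(x),x-z\rangle-\tfrac{\ell}{2}\|x-z\|^2$ and the identity $\tfrac{1}{2\gamma}\|x-y\|^2-\tfrac{1}{2\gamma}\|y-z\|^2=-\tfrac{1}{\gamma}\langle y-x,x-z\rangle-\tfrac{1}{2\gamma}\|x-z\|^2$; using Lemma~\ref{l:Rela}\ref{l:Rela_y} to rewrite $\nabla f(x_n)-\tfrac{1}{\gamma}(y_n-x_n)=\nabla f(x_n)-\nabla f(x_{n+1})-\tfrac{1}{\gamma}(x_{n+1}-x_n)$ and applying $\ell$-Lipschitz produces a lower bound $\mathcal{F}_n\geq F(z_n)-C_1\|x_{n+1}-x_n\|^2-C_2\|x_n-z_n\|^2$ with explicit constants. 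Because $\|x_n-z_n\|=\tfrac{1}{\nu}\|y_n-y_{n-1}\|\leq \tfrac{1+\gamma\ell}{\nu}\|x_{n+1}-x_n\|$ (from the $y$-update and Lemma~\ref{l:Rela}\ref{l:Rela_yy}), this reduces to $\mathcal{F}_n\geq F(z_n)-C\|x_{n+1}-x_n\|^2$. Coercivity of $F$ (hence $F$ bounded below) together with the telescoped descent lets me close the bootstrap: $(\mathcal{F}_n)$ is bounded, $\sum\|x_{n+1}-x_n\|^2<+\infty$, $(F(z_n))$ is bounded above, and coercivity of $F$ yields boundedness of $(z_n)$. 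Then $(x_n)$ is bounded via $\|x_n-z_n\|\to 0$, $(y_n)$ via Lemma~\ref{l:Rela}\ref{l:Rela_y}, and $(w_n)$ via Lemma~\ref{l:Rela}\ref{l:Rela_z} combined with Proposition~\ref{p:conj}\ref{p:conj_cvx} (rewritten as $w_{n+1}\in\partial g(z_n-\tau(w_{n+1}-w_n))$, whose argument stays bounded because $\tau\|w_{n+1}-w_n\|\to 0$, and $\partial g$ is locally bounded since $g$ is continuous convex). Finally, parts \ref{l:Rela_yy}--\ref{l:Rela_zz} of Lemma~\ref{l:Rela} supply $\|y_{n+1}-y_n\|\to 0$ and $\|z_{n+1}-z_n\|\to 0$.

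\textit{Cluster points.} For a cluster point $(\overline{x},\overline{y},\overline{z},\overline{w})$ of a convergent subsequence, $\|x_n-z_n\|\to 0$ forces $\overline{x}=\overline{z}$. I pass to the limit in the inclusions of Lemma~\ref{l:Rela}\ref{l:Rela_z}--\ref{l:Rela_w} using robustness of the limiting subdifferential; to legitimise this for $h$, I first test the $z_{n+1}$-subproblem against $\overline{z}$, which combined with lower semicontinuity yields $h(z_{n_k})\to h(\overline{z})$. The limits are $\overline{z}\in\partial g^*(\overline{w})$ and $\overline{w}+\nabla f(\overline{z})\in\partial h(\overline{z})$; Proposition~\ref{p:conj}\ref{p:conj_cvx} converts the first into $\overline{w}\in\partial g(\overline{z})$, and combining gives $0\in\nabla f(\overline{z})+\partial h(\overline{z})-\partial g(\overline{z})$. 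At the cluster point Lemma~\ref{l:Rela}\ref{l:Rela_y} gives $\overline{y}=\overline{x}+\gamma\nabla f(\overline{x})$, so $\overline{x}=\overline{z}$ collapses the quadratic part of $\mathcal{F}$ to zero, while Fenchel--Young becomes an equality by $\overline{w}\in\partial g(\overline{z})$; hence $\mathcal{F}(\overline{x},\overline{y},\overline{z},\overline{w})=F(\overline{z})$, and the monotone convergence of $(\mathcal{F}_n)$ supplies the remaining limit identities.

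\textit{Main obstacle.} The most delicate step is the algebra of Part (i): tracking the cross-products produced by $y_{n+1}=y_n+\nu(z_{n+1}-x_{n+1})$ and $y_n-x_{n+1}=\gamma\nabla f(x_{n+1})$, and absorbing the Lipschitz slack $\gamma^2\ell^2\|x_{n+1}-x_n\|^2$ into the residual so that the constant comes out exactly as $\delta=\tfrac{2-\nu-\nu\rho\gamma-2\ell^2\gamma^2}{\nu\gamma}$. A secondary subtlety arises when $\tau=0$, where summability of $(\|w_{n+1}-w_n\|^2)$ is unavailable; in that case Lemma~\ref{l:Rela}\ref{l:Rela_z} simplifies to $w_{n+1}\in\partial g(z_n)$, and boundedness of $(w_n)$ still follows from boundedness of $(z_n)$ and local boundedness of $\partial g$.
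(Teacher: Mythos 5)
Your overall architecture coincides with the paper's: split the decrease of $\mathcal{F}$ into one exact identity from the $y$-update plus three descent inequalities from the subproblems, recombine via $y_n-x_{n+1}=\gamma\nabla f(x_{n+1})$ and Lipschitz continuity to get $\delta$, then sandwich $\mathcal{F}_n$ against $F(z_n)$ to bootstrap boundedness, and finally pass to the limit in the optimality inclusions. Part~(i) is only sketched (the cross-term bookkeeping you defer is exactly the content of the paper's computation, driven by the identity $\nu(x_n-z_n)=-(y_n-y_{n-1})=-(x_{n+1}-x_n)-\gamma(\nabla f(x_{n+1})-\nabla f(x_n))$, valid for $n\geq 1$), but the route is right.

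There is, however, a genuine gap in Part~(ii). Your lower bound reads $\mathcal{F}_n\geq F(z_n)-C_1\|x_{n+1}-x_n\|^2-C_2\|x_n-z_n\|^2$, reduced to $\mathcal{F}_n\geq F(z_n)-C\|x_{n+1}-x_n\|^2$. A bound with a \emph{negative} correction in the forward increment cannot launch the bootstrap: to telescope \eqref{eq:decrease} and conclude $\sum\|x_{n+1}-x_n\|^2<+\infty$ you first need $(\mathcal{F}_n)$ bounded below, but your estimate only gives $\mathcal{F}_n\geq \inf F-C\|x_{n+1}-x_n\|^2$, and $\|x_{n+1}-x_n\|$ is not yet known to be bounded; combining it with the descent inequality yields only $a_n\leq K+\tfrac{2C}{\delta}a_{n+1}$ for $a_n:=\|x_{n+1}-x_n\|^2$, a recursion that permits geometric blow-up. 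The source of the problem is that you rewrote $\nabla f(x_n)-\tfrac{1}{\gamma}(y_n-x_n)$ using the \emph{forward} relation $y_n=x_{n+1}+\gamma\nabla f(x_{n+1})$ and then discarded sign information via Cauchy--Schwarz. Use the \emph{backward} relation instead: $\gamma\nabla f(x_n)=y_{n-1}-x_n=(y_n-z_n)-(1-\nu)(x_n-z_n)$, so that $\bigl\langle \nabla f(x_n)-\tfrac{1}{\gamma}(y_n-x_n),\,x_n-z_n\bigr\rangle=\tfrac{\nu}{\gamma}\|x_n-z_n\|^2$ exactly. The cross term then cancels cleanly and one obtains $\mathcal{F}_n\geq F(z_n)+\bigl(\tfrac{1}{2\gamma}-\tfrac{\ell}{2}\bigr)\|x_n-z_n\|^2$ with a \emph{nonnegative} remainder (since $\gamma<\overline\gamma$ forces $\gamma\ell<1$), which is what makes $(\mathcal{F}_n)$ bounded below and the rest of your argument go through. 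A smaller omission in the cluster-point step: to identify $\lim_n\mathcal{F}(v_n)$ with $\mathcal{F}(\overline v)$ you also need $g^*(w_{k_n})\to g^*(\overline w)$, obtained by testing the $w$-subproblem against $\overline w$ exactly as you do for $h$; establishing $h(z_{k_n})\to h(\overline z)$ alone is not enough.
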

\begin{proof}
\ref{sub_convergence_descend}: 
%
Let $n\in \mathbb{N}$. First, we have that
\begin{align}\label{eq:descend_x}
&\mathcal{F}(x_{n+1},y_{n+1},z_{n+1},w_{n+1}) -\mathcal{F}(x_{n+1},y_{n},z_{n+1},w_{n+1}) \notag \\ 
&= \frac{1}{2\gamma}\|x_{n+1} -y_{n+1}\|^2 -\frac{1}{2\gamma}\|y_{n+1} -z_{n+1}\|^2 -\frac{1}{2\gamma}\|x_{n+1} -y_{n}\|^2 +\frac{1}{2\gamma}\|y_{n} -z_{n+1}\|^2 \notag \\ 
&= \frac{1}{\gamma}\langle y_{n+1} -y_n, z_{n+1} -x_{n+1} \rangle \notag \\
&= \frac{\nu}{\gamma}\|x_{n+1} -z_{n+1}\|^2,
\end{align}
where the last equality follows from the updating step of $y_{n+1}$. Next, we observe that
\begin{align*}
\mathcal{F}(x,y,z,w) =f(x) +h(z) +g^*(w) -\langle x, w\rangle +\frac{1}{2\gamma}\|2x -y +\gamma w -z\|^2 -\frac{1}{2\gamma}\|x -y +\gamma w\|^2 -\frac{\nu}{\gamma}\|x -z\|^2,
\end{align*}
which comes from the definition of $\mathcal{F}(x,y,z,w)$ and the fact that 
\begin{align*}
\|2x -y +\gamma w -z\|^2 -\|x -y +\gamma w\|^2 &= \|(x -z) +(x -y +\gamma w)\|^2 -\|x -y +\gamma w\|^2 \\
&= \|x -z\|^2 +2\langle x -z, x -y +\gamma w\rangle \\
&= \|x -z\|^2 +2\langle x -z, x -y\rangle +2\gamma\langle x -z, w\rangle \\
&= 2\|x -z\|^2 +\|x -y\|^2 -\|y -z\|^2 +2\gamma\langle x -z, w\rangle.
\end{align*}
Therefore,
\begin{align}\label{eq:descend_z}
&\mathcal{F}(x_{n+1},y_{n},z_{n+1},w_{n+1}) -\mathcal{F}(x_{n+1},y_{n},z_{n},w_{n+1}) \notag \\
&= h(z_{n+1}) +\frac{1}{2\gamma}\|2x_{n+1} -y_{n} +\gamma w_{n+1} -z_{n+1}\| -\frac{\nu}{\gamma}\|x_{n+1} -z_{n+1}\|^2 \notag \\
&\quad -h(z_{n}) -\frac{1}{2\gamma}\|2x_{n+1} -y_{n} +\gamma w_{n+1} -z_{n}\|^2 +\frac{\nu}{\gamma}\|x_{n+1} -z_{n}\|^2 \notag \\
&\leq -\frac{\nu}{\gamma}\|x_{n+1} -z_{n+1}\|^2 +\frac{\nu}{\gamma}\|x_{n+1} -z_n\|^2, 
\end{align}
where the inequality is obtained from the definition of $z_{n+1}$ as a minimizer. We also see that $x_{n+1}$ is a minimizer of $f +\frac{1}{2\gamma}\|\cdot -y_n\|^2$, which results in
\begin{align}\label{eq:descend_y}
&\mathcal{F}(x_{n+1},y_{n},z_{n},w_{n+1}) -\mathcal{F}(x_{n},y_{n},z_{n},w_{n+1}) \notag \\
&= f(x_{n+1}) +\frac{1}{2\gamma}\|x_{n+1} -y_n\|^2 -f(x_n) -\frac{1}{2\gamma}\|x_n -y_n\|^2 +\frac{1 -\nu}{\gamma}\|x_{n+1} -z_n\|^2 -\frac{1 -\nu}{\gamma}\|x_n -z_n\|^2 \notag \\
&\leq -\frac{1}{2}\left(\frac{1}{\gamma} -\rho\right)\|x_{n+1} -x_n\|^2 +\frac{1 -\nu}{\gamma}\|x_{n+1} -z_n\|^2 -\frac{1 -\nu}{\gamma}\|x_n -z_n\|^2 .
\end{align}
Furthermore, it follows from the definition of $w_{n+1}$ that
\begin{align}\label{eq:descend_p}
\mathcal{F}(x_{n},y_{n},z_{n},w_{n+1}) -\mathcal{F}(x_{n},y_{n},z_{n},w_{n}) =g^*(w_{n+1}) -\langle w_{n+1}, z_n\rangle -g^*(w_{n}) +\langle w_{n}, z_n\rangle \leq -\frac{\tau}{2}\|w_{n+1} -w_n\|^2.
\end{align}
Summing up \eqref{eq:descend_x}, \eqref{eq:descend_z}, \eqref{eq:descend_y}, and \eqref{eq:descend_p}, and then noting $x_{n+1} -z_n =(x_{n+1} -x_n) +(x_n -z_n)$, we obtain
\begin{align}\label{eq:descend_F-}
&\mathcal{F}(x_{n+1},y_{n+1},z_{n+1},w_{n+1}) -\mathcal{F}(x_{n},y_{n},z_{n},w_{n}) \notag \\
&\leq \left(-\frac{1}{2\gamma}+\frac{\rho}{2}\right)\|x_{n+1} -x_n\|^2 +\frac{1}{\gamma}\|x_{n+1} -z_n\|^2 +\frac{\nu-1}{\gamma}\|x_n-z_n\|^2 -\frac{\tau}{2} \|w_{n+1} -w_n\|^2 \notag \\
&= \left(\frac{1}{2\gamma} +\frac{\rho}{2}\right)\|x_{n+1} -x_n\|^2 +\frac{2}{\gamma}\langle x_{n+1} -x_n, x_n -z_n\rangle +\frac{\nu}{\gamma}\|x_n -z_n\|^2 -\frac{\tau}{2}\|w_{n+1} -w_n\|^2.
\end{align}

Now, let $n\in \mathbb{N}^*$. It follows from Lemma~\ref{l:Rela}\ref{l:Rela_y} that
\begin{align*}
&\nu(x_n -z_n) =-(y_n -y_{n-1}) =-(x_{n+1} -x_n) -\gamma(\nabla f(x_{n+1}) -\nabla f(x_n)),
\end{align*}
which leads to
\begin{align*}
\frac{\nu}{\gamma}\|x_n -z_n\|^2 &= \frac{1}{\nu\gamma}\|(x_{n+1} -x_n) +\gamma(\nabla f(x_{n+1}) -\nabla f(x_n))\|^2 \notag \\
&= \frac{1}{\nu\gamma}\|x_{n+1} -x_n\|^2 +\frac{\gamma}{\nu}\|\nabla f(x_{n+1}) -\nabla f(x_n)\|^2 +\frac{2}{\nu}\langle x_{n+1} -x_n, \nabla f(x_{n+1}) -\nabla f(x_n) \rangle,
\end{align*}
and also
\begin{align*}
\frac{2}{\gamma}\langle x_{n+1} -x_n, x_n -z_n\rangle &= -\frac{2}{\nu\gamma} \langle x_{n+1} -x_n, (x_{n+1} -x_n) +\gamma(\nabla f(x_{n+1}) -\nabla f(x_n))\rangle \notag \\
&= -\frac{2}{\nu\gamma}\|x_{n+1} -x_n\|^2 -\frac{2}{\nu}\langle x_{n+1} -x_n, \nabla f(x_{n+1}) -\nabla f(x_n)\rangle.
\end{align*}
Combining with \eqref{eq:descend_F-} and using the Lipschitz continuity of $\nabla f$, we obtain that
\begin{align}\label{eq:descend_F}
&\mathcal{F}(x_{n+1},y_{n+1},z_{n+1},w_{n+1}) -\mathcal{F}(x_{n},y_{n},z_{n},w_{n}) \notag \\
&\leq \left(\frac{1}{2\gamma} +\frac{\rho}{2} -\frac{1}{\nu\gamma}\right)\|x_{n+1} -x_n\|^2 +\frac{\gamma}{\nu}\|\nabla f(x_{n+1} -\nabla f(x_n)\|^2 -\frac{\tau}{2}\|w_{n+1} -w_n\|^2 \notag \\
&\leq \left(\frac{1}{2\gamma} +\frac{\rho}{2} -\frac{1}{\nu\gamma} +\frac{\ell^2\gamma}{\nu}\right)\|x_{n+1} -x_n\|^2 -\frac{\tau}{2}\|w_{n+1} -w_n\|^2,
\end{align}
which implies \eqref{eq:decrease}. Now, by the choice of $\gamma$, $\tau$, and $\nu$, it holds that {\color{black} $\delta =\frac{2 -\nu -\nu\rho\gamma -2\ell^2\gamma^2}{\nu\gamma}>0$ } and $\tau \geq 0$, hence $(\mathcal{F}(x_{n},y_{n},z_{n},w_{n}))_{n\in\mathbb{N}^*}$ is nonincreasing.


\ref{bounded_of_sequence}: Let $n\in \mathbb{N}^*$. Since $\nabla f$ is $\ell$-Lipschitz continuous, it follows from \cite[Lemma~1.2.3]{Nesterov2018} that
\begin{align}\label{eq:fxn}
f(z_n) +\langle \nabla f(x_n), x_n -z_n\rangle -\frac{\ell}{2}\|x_n -z_n\|^2 \leq f(x_n) \leq f(z_n) +\langle \nabla f(x_n), x_n -z_n\rangle +\frac{\ell}{2}\|x_n -z_n\|^2.    
\end{align}
In addition, we derive from Lemma~\ref{l:Rela}\ref{l:Rela_y} and the updating step of $y_{n+1}$ that
\begin{align*}
\gamma \nabla f(x_n) =y_{n-1} -x_n = y_n -\nu(z_n -x_n) -x_n =(y_n -z_n) -(1 -\nu)(x_n -z_n),
\end{align*}
and so
\begin{align}\label{eq:nabla fxn}
\langle \nabla f(x_n), x_n -z_n\rangle &= \frac{1}{\gamma}\langle y_n -z_n, x_n -z_n\rangle -\frac{1 -\nu}{\gamma}\|x_n -z_n\|^2 \notag \\
&= \frac{1}{2\gamma}\|y_n -z_n\|^2 +\frac{1}{2\gamma}\|x_n -z_n\|^2 -\frac{1}{2\gamma}\|x_n -y_n\|^2 -\frac{1 -\nu}{\gamma}\|x_n -z_n\|^2.
\end{align}

Next, by Proposition~\ref{p:conj}\ref{p:conj_FY},
\begin{align}\label{eq:g*wn>}
g^*(w_n) -\langle w_n, z_n\rangle \geq -g(z_n).    
\end{align}
According to Lemma~\ref{l:Rela}\ref{l:Rela_z}, $z_{n-1} -\tau(w_n -w_{n-1}) \in \partial g^*(w_n)$. By the convexity of $g$ and Proposition~\ref{p:conj}\ref{p:conj_cvx},
\begin{align}\label{eq:g*wn}
g^*(w_n) -\langle w_n, z_n\rangle 
&= \langle w_n, z_{n-1} -\tau(w_n -w_{n-1})\rangle -g(z_{n-1} -\tau(w_n -w_{n-1})) -\langle w_n, z_n\rangle \notag \\
&= -\langle w_n, (z_n -z_{n-1}) +\tau(w_n -w_{n-1})\rangle -g(z_{n-1} -\tau(w_n -w_{n-1})).
\end{align}
Let $t_n\in \partial g(z_n)$. Again using the convexity of $g$, we have that
\begin{align*}
g(z_{n-1} -\tau(w_n -w_{n-1})) \geq g(z_n) -\langle t_n, (z_n -z_{n-1}) +\tau(w_n -w_{n-1})\rangle,
\end{align*}
which combined with \eqref{eq:g*wn} yields
\begin{align}\label{eq:g*wn<}
g^*(w_n) -\langle w_n, z_n\rangle \leq -g(z_n) +\langle t_n -w_n, (z_n -z_{n-1}) +\tau(w_n -w_{n-1})\rangle.
\end{align}
Combining \eqref{eq:fxn}, \eqref{eq:nabla fxn}, \eqref{eq:g*wn>}, and \eqref{eq:g*wn<}, we obtain that
\begin{align}\label{eq:Fn}
\mathcal{F}(x_n,y_n,z_n,w_n)\geq f(z_n) +h(z_n) -g(z_n) +\left( \frac{1}{2\gamma} -\frac{\ell}{2}\right)\|x_n -z_n\|^2 ={\color{black}\mathbf{F}(z_n)} +\left( \frac{1}{2\gamma} -\frac{\ell}{2}\right)\|x_n -z_n\|^2   
\end{align}
and that
\begin{align}\label{eq:Fn'}
\mathcal{F}(x_n,y_n,z_n,w_n)\leq {\color{black} \mathbf{F}(z_n)} +\langle t_n -w_n, (z_n -z_{n-1}) +\tau(w_n -w_{n-1})\rangle +\left(\frac{1}{2\gamma} +\frac{\ell}{2}\right)\|x_n -z_n\|^2.    
\end{align}

We now observe that
\begin{align}\label{eq:gamma}
\frac{1}{2\gamma} -\frac{\ell}{2} >0.    
\end{align}
Indeed, if $\ell =0$, then $\frac{1}{2\gamma} -\frac{\ell}{2} =\frac{1}{2\gamma} >0$. If $\ell >0$, then
\begin{align*}
\gamma < \frac{-\nu\rho +\sqrt{\nu^2\rho^2 +8(2 -\nu)\ell^2}}{4\ell^2} \leq \frac{-\nu\rho +(\sqrt{\nu^2\rho^2} +\sqrt{8(2 -\nu)\ell^2})}{4\ell^2} <\frac{-\nu\rho +(\nu\rho +4\ell)}{4\ell^2} =\frac{1}{\ell}.    
\end{align*}
So, we always have \eqref{eq:gamma}. Moreover, {\color{black}$\mathbf{F}$} is bounded below since it is a proper lower semicontinuous coercive function \cite[Theorem~1.9]{RW98}. We then derive from \eqref{eq:Fn} that the sequence $(\mathcal{F}(x_n,y_n,z_n,w_n))_{n\in \mathbb{N}^*}$ is bounded below. According to \ref{sub_convergence_descend}, $(\mathcal{F}(x_n,y_n,z_n,w_n))_{n\in \mathbb{N}^*}$ is nonincreasing, so it is convergent.

Telescoping \eqref{eq:decrease} yields
\begin{align*}
\frac{\delta}{2}\sum_{n=1}^{+\infty} \|x_{n+1} -x_n\|^2 +\frac{\tau}{2}\sum_{n=1}^{+\infty} \|w_{n+1} -w_n\|^2 &\leq \sum_{n=1}^{+\infty} (\mathcal{F}(x_n,y_n,z_n,w_n) -\mathcal{F}(x_{n+1},y_{n+1},z_{n+1},w_{n+1})) \\
&= \mathcal{F}(x_1,w_1,z_1,y_1) -\lim_{n\to +\infty}\mathcal{F}(x_n,y_n,z_n,w_n) < +\infty,
\end{align*}
which implies that $\sum_{n=1}^{+\infty} \|x_{n+1} -x_n\|^2< +\infty$ and $\sum_{n=1}^{+\infty} \tau\|w_{n+1} -w_n\|^2< +\infty$, hence $\|x_{n+1} -x_n\|\to 0$ and $\tau \|w_{n+1}-w_n\|\to 0$ as $n\to +\infty$. Combining with Lemma~\ref{l:Rela}\ref{l:Rela_yy}\&\ref{l:Rela_zz}, we also obtain that $\|y_{n+1} -y_n\|\to 0$ and $\|z_{n+1}-z_{n}\|\to 0$ as $n\to +\infty$.

Now, we derive from \ref{sub_convergence_descend}, \eqref{eq:Fn}, and the below boundedness of {\color{black}$\mathbf{F}$} that 
{\color{black}$(\mathbf{F}(z_n))_{n\in \mathbb{N}^*}$} and $(\|x_n -z_n\|)_{n\in \mathbb{N}^*}$ are bounded. 
By the coercivity of {\color{black}$\mathbf{F}$}, we obtain the boundedness of $(z_n)_{n\in \mathbb{N}^*}$ and then that of $(x_n)_{n\in \mathbb{N}^*}$. Combining with Lemma~\ref{l:Rela}\ref{l:Rela_y}, we deduce that $(y_n)_{n\in \mathbb{N}^*}$ is also bounded. 
In view of Lemma~\ref{l:Rela}\ref{l:Rela_z} and Proposition~\ref{p:conj}\ref{p:conj_cvx}, $w_{n+1}\in \partial g(z_n -\tau(w_{n+1} -w_n))$. Since $t_n\in \partial g(z_n)$, $(z_n)_{n\in \mathbb{N}^*}$ is bounded, and $\tau\|w_{n+1} -w_n\|\to 0$ as $n\to +\infty$, we have from \cite[Proposition~16.20]{Bauschke2017_book} that
\begin{align}\label{eq:wntn bounded}
\text{$(w_n)_{n\in \mathbb{N}^*}$ and $(t_n)_{n\in \mathbb{N}^*}$ are bounded}.
\end{align} 


Let $(\overline{x},\overline{y},\overline{z},\overline{w})$ be any cluster point of the sequence $(x_n,y_n,z_n,w_n)_{n\in \mathbb{N}^*}$. Then there exists a subsequence $(x_{k_n},y_{k_n},z_{k_n},w_{k_n})_{n\in \mathbb{N}}$ that converges to $(\overline{x},\overline{y},\overline{z},\overline{w})$. Since $x_{n+1} -x_n \to 0$, $x_{n+1} -z_{n+1} =-\frac{1}{v}(y_{n+1} -y_n) \to 0$, $z_{n+1} -z_n \to 0$, and $\tau(w_{n+1} -w_n) \to 0$ as $n \to +\infty$, we have that $\overline{x} =\overline{z}$, that  
\begin{align}\label{eq:asymptotic xyz}
\lim_{n\to +\infty} (x_{k_n-1},y_{k_n-1},z_{k_n-1}) =\lim_{n\to +\infty} (x_{k_n},y_{k_n},z_{k_n}) =(\overline{x},\overline{y},\overline{z}),
\end{align}
and that
\begin{align}\label{eq:asymptotic w}
\lim_{n\to +\infty} w_{k_n-1} =\lim_{n\to +\infty} w_{k_n} =\overline{w} \text{~~if~} \tau >0.
\end{align}  
We have from the updating step of $z_{n+1}$ that
\begin{align*}
h(z_{k_n}) +\frac{1}{2\gamma}\|z_{k_n} -(2x_{k_n} -y_{k_n-1} +\gamma w_{k_n})\|^2 \leq  h(\overline{z}) +\frac{1}{2\gamma}\|\overline{z} -(2x_{k_n} -y_{k_n-1} +\gamma w_{k_n})\|^2
\end{align*}
and from the updating step of $w_{n+1}$ that
\begin{align*}
g^*(w_{k_n}) -\langle w_{k_n}, z_{k_n-1}\rangle +\frac{\tau}{2}\|w_{k_n} -w_{k_n-1}\|^2 \leq g^*(\overline{w}) -\langle \overline{w}, z_{k_n-1}\rangle +\frac{\tau}{2}\|\overline{w} -w_{k_n-1}\|^2.
\end{align*}
Taking the limit and using \eqref{eq:asymptotic xyz} and \eqref{eq:asymptotic w}, we derive that $\limsup_{n\to +\infty} h(z_{k_n}) \leq h(\overline{z})$ and $\limsup_{n\to +\infty} g^*(w_{k_n}) \leq g^*(\overline{w})$. Together with the lower semicontinuity of $h$ and $g^*$, this yields  
\begin{align}\label{eq:lim_h_g_star}
\lim_{n\to +\infty} h(z_{k_n}) =h(\overline{z}) \text{~and~} \lim_{n \to +\infty} g^*(w_{k_n}) =g^*(\overline{w}).   
\end{align}
By Lemma~\ref{l:Rela}\ref{l:Rela_y}--\ref{l:Rela_w},
\begin{align*}
w_{k_n} +\frac{1}{\gamma}(x_{k_n} -z_{k_n})\in \nabla f(x_{k_n}) +\partial h(z_{k_n}) \text{~~and~~} z_{k_n-1} -\tau(w_{k_n} -w_{k_n-1})\in  \partial g^*(w_{k_n}).    
\end{align*}
Passing to the limit, we obtain that $\overline{w}\in \nabla f(\overline{x}) +\partial h(\overline{z}) =\nabla f(\overline{z}) +\partial h(\overline{z})$ and $\overline{z}\in \partial g^*(\overline{w})$. The latter implies 
\begin{align}\label{eq:g*g}
\overline{w}\in \partial g(\overline{z}) \text{~~and~~} g^*(\overline{w}) -\langle \overline{w}, \overline{z}\rangle =-g(\overline{z})
\end{align}
due to the convexity of $g$ and Proposition~\ref{p:conj}\ref{p:conj_cvx}. Therefore, $0\in \nabla f(\overline{z}) +\partial h(\overline{z}) -\partial g(\overline{z})$. 

Next, let us note that 
\begin{align*}
\mathcal{F}(x_{k_n},y_{k_n},z_{k_n},w_{k_n}) &=f(x_{k_n}) +h(z_{k_n}) +g^*(w_{k_n}) -\langle w_{k_n}, z_{k_n}\rangle \\
&\quad +\frac{1}{2\gamma}\|x_{k_n}  -y_{k_n}\|^2 -\frac{1}{2\gamma}\|y_{k_n} -z_{k_n}\|^2 +\frac{1 -\nu}{\gamma}\|x_{k_n} -z_{k_n}\|^2.
\end{align*}
By the continuity of $f$ and \eqref{eq:lim_h_g_star},
\begin{align*}
\lim_{n\to +\infty} \mathcal{F}(x_{k_n},y_{k_n},z_{k_n},w_{k_n}) =f(\overline{z}) +h(\overline{z}) +g^*(\overline{w}) -\langle \overline{w}, \overline{z}\rangle =\mathcal{F}(\overline{x},\overline{y},\overline{z},\overline{w}).
\end{align*}
Combining with \eqref{eq:g*g} and the convergence of the whole sequence $(\mathcal{F}(x_n,y_n,z_n,w_n))_{n\in \mathbb{N}^*}$, we deduce that
\begin{align*}
\lim_{n\to +\infty} \mathcal{F}(x_n,y_n,z_n,w_n) = \mathcal{F}(\overline{x},\overline{y},\overline{z},\overline{w}) ={\color{black}\mathbf{F}(\overline{z})}.
\end{align*}
Finally, using \eqref{eq:Fn}, \eqref{eq:Fn'}, and \eqref{eq:wntn bounded} as well as noting that $x_n -z_n =-\frac{1}{\nu}(y_n -y_{n-1})\to 0$, $z_n -z_{n-1}\to 0$, and $\tau(w_n -w_{n-1})\to 0$ as $n\to +\infty$, we have $\lim_{n\to +\infty} \mathcal{F}(x_n,y_n,z_n,w_n) =\lim_{n\to +\infty} {\color{black}\mathbf{F}(z_n)}$, which completes the proof.
\end{proof}

{\color{black} We want to note that in the case where the constants $\ell$ and $\rho$ are unknown, the bound $\overline{\gamma}$ should be chosen sufficiently small to have guaranteed convergence. One strategy is that this bound can be initialized as a large value, and decreased gradually during the iterative process, as shown in \cite[Remark 4]{Li2015DR}.} Next, we prove the full sequential convergence and deduce the convergence rates of both the sequence generated by the BDR and the corresponding objective function values.

\begin{theorem}[Full sequential convergence]\label{t:full_sequence_convergence} Suppose that Assumption~\ref{a:standing} holds, that {\color{black}$\mathbf{F}$} is coercive, and that 
\begin{align*}
   0 <\gamma < \overline{\gamma}:= \frac{-\nu\rho +\sqrt{\nu^2\rho^2 -8(\nu -2)\ell^2}}{4\ell^2}.
\end{align*}
Let $(x_n,y_n,z_n,w_n)_{n\in \mathbb{N}^*}$ be the sequence generated by Algorithm~\ref{algo:CDR} with $\tau\in (0, +\infty)$.
\begin{enumerate}
\item\label{t:full_cvg} 
Suppose that $\mathcal{F}$ is a KL function. Then the sequence $(x_{n},y_{n},z_{n},w_{n})_{n\in\mathbb{N}^*}$ converges to $(x^*,y^*,z^*,w^*)$, where $x^* =z^*$, $0\in \nabla f(z^*) +\partial h(z^*) -\partial g(z^*)$, and 
\begin{align*}
\sum_{n=0}^{+\infty} \|(x_{n+1},y_{n+1},z_{n+1},w_{n+1})-(x_{n},y_{n},z_{n},w_{n})\| < +\infty.
\end{align*}
\item\label{t:full_rate} 
Suppose that $\mathcal{F}$ is a KL function with exponent $\theta \in [0,1)$. Then the following hold:
\begin{enumerate}
\item\label{t:full_rate_finite}
If $\theta =0$, then $(x_n,y_n,z_n,w_n)_{n\in\mathbb{N}^*}$ converges to $(x^*,y^*,z^*,w^*)$ in a finite number of steps. 
\item\label{t:full_rate_linear}
If $\theta\in (0,\frac{1}{2}]$, then there exist $\Gamma\in \mathbb{R}_{++}$ and $\zeta\in \left(0,1\right)$ such that, for all $n \in \mathbb{N}^*$, 
\begin{align*}
\|(x_n,y_n,z_n, w_n)-(x^*,y^*,z^*,w^*)\|\leq  \Gamma \zeta^{\frac{n}{2}}~\text{and}~|{\color{black}\mathbf{F}(z_n)-\mathbf{F}(z^*)}|\leq\Gamma \zeta^{\frac{n}{2}}.
\end{align*}
\item\label{t:full_rate_sublinear}
If $\theta\in (\frac{1}{2},1)$, then there exists $\Gamma\in \mathbb{R}_{++}$ such that, for all $n \in \mathbb{N}^*$,
\begin{align*}
\|(x_n,y_n,z_n, w_n)-(x^*,y^*,z^*,w^*)\| \leq \Gamma n^{-\frac{1-\theta}{2\theta-1}}~\text{and}~|{\color{black}\mathbf{F}(z_n)-\mathbf{F}(z^*)}|\leq\Gamma n^{-\frac{1-\theta}{2\theta-1}}.
\end{align*}
\end{enumerate}
\end{enumerate}
\end{theorem}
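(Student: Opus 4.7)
\medskip

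\noindent\textbf{Proof proposal.}
The plan is to follow the standard Attouch--Bolte--Svaiter style argument (as adapted in \cite{Attouch2011} and subsequent works such as Bolte--Sabach--Teboulle) built around three ingredients for the Lyapunov function $\mathcal F$: (i) sufficient decrease along the iterates, (ii) a subgradient lower bound, and (iii) a continuity/limiting value condition at cluster points; combined with the KL property, these will yield $\sum_n \|u_{n+1}-u_n\|<+\infty$, where $u_n :=(x_n,y_n,z_n,w_n)$, and hence full convergence.  Theorem~\ref{theorem:sub_convergence}\ref{sub_convergence_descend} already supplies (i) in the form
\[
\mathcal F(u_{n+1})+\tfrac{\delta}{2}\|x_{n+1}-x_n\|^2+\tfrac{\tau}{2}\|w_{n+1}-w_n\|^2\le \mathcal F(u_n),
\]
while Theorem~\ref{theorem:sub_convergence}\ref{bounded_of_sequence} delivers both the boundedness of $(u_n)$ and the continuity condition $\mathcal F(u_n)\to \mathcal F(u^*)$ for every cluster point $u^*=(\overline x,\overline y,\overline z,\overline w)$; note the assumption $\tau>0$ is where the bound $\|z_{n+1}-z_n\|\to 0$ and Lemma~\ref{l:Rela} together become strong enough to close the argument.

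\medskip

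The main obstacle is step (ii), the subgradient bound.  Computing partial subdifferentials of $\mathcal F$ yields
\[
\partial_x\mathcal F = \nabla f(x)+\tfrac{1}{\gamma}(x-y)+\tfrac{2(1-\nu)}{\gamma}(x-z),\qquad \nabla_y\mathcal F = \tfrac{1}{\gamma}(z-x),
\]
\[
\partial_z\mathcal F \supseteq \partial h(z)-w+\tfrac{1}{\gamma}(y-z)+\tfrac{2(\nu-1)}{\gamma}(x-z),\qquad \partial_w\mathcal F \supseteq \partial g^*(w)-z.
\]
Evaluating at $u_{n+1}$ and substituting the optimality identities from Lemma~\ref{l:Rela}\ref{l:Rela_y}--\ref{l:Rela_w} produces elements of $\partial\mathcal F(u_{n+1})$ expressed in terms of $y_{n+1}-y_n$, $z_{n+1}-z_n$, $w_{n+1}-w_n$, and $x_{n+1}-y_n=-\gamma\nabla f(x_{n+1})$; applying Lemma~\ref{l:Rela}\ref{l:Rela_yy}--\ref{l:Rela_zz} then recasts each of these in terms of $\|x_{n+2}-x_{n+1}\|$, $\|x_{n+1}-x_n\|$, and $\|w_{n+1}-w_n\|$.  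The outcome is a bound of the form
\[
\dist(0,\partial\mathcal F(u_{n+1}))\le C_1\|x_{n+2}-x_{n+1}\|+C_2\|x_{n+1}-x_n\|+C_3\|w_{n+1}-w_n\|,
\]
for constants $C_1,C_2,C_3$ depending on $\gamma,\ell,\nu,\tau$.  The one-step index shift on the $x$-term is not a problem: the standard KL machinery (see, e.g., \cite[Theorem~3.7]{Attouch2011} or the variant in Frankel--Garrigos--Peypouquet) accommodates such a bound via a summation-by-parts trick or by absorbing the extra term with $\tfrac12(\|x_{n+2}-x_{n+1}\|^2+\|x_{n+1}-x_n\|^2)$.

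\medskip

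With (i)--(iii) in hand and the KL property of $\mathcal F$ at the cluster point $u^*$, the classical argument runs as follows.  On the tail $\mathcal F(\overline u)<\mathcal F(u_n)<\mathcal F(\overline u)+\eta$, apply the KL inequality with desingularizing function $\varphi$; concavity of $\varphi$ together with the decrease property (i) give
\[
\bigl[\varphi(\mathcal F(u_n)-\mathcal F(u^*))-\varphi(\mathcal F(u_{n+1})-\mathcal F(u^*))\bigr]\cdot\dist(0,\partial\mathcal F(u_n))\ge \tfrac{\delta}{2}\|x_{n+1}-x_n\|^2+\tfrac{\tau}{2}\|w_{n+1}-w_n\|^2;
\]
combining with the subgradient bound (ii) and applying the Young-type inequality $\sqrt{ab}\le\tfrac{1}{2}(a+b)$ telescopes to $\sum_n\|x_{n+1}-x_n\|<+\infty$ and $\sum_n\|w_{n+1}-w_n\|<+\infty$.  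Lemma~\ref{l:Rela}\ref{l:Rela_yy}--\ref{l:Rela_zz} then transfer summability to $(y_n)$ and $(z_n)$, so $(u_n)$ is Cauchy, proving \ref{t:full_cvg}; the limit $u^*=(x^*,y^*,z^*,w^*)$ satisfies the stationarity conditions by Theorem~\ref{theorem:sub_convergence}\ref{bounded_of_sequence}.

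\medskip

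For part \ref{t:full_rate}, specializing $\varphi(t)=c t^{1-\theta}$ turns the KL inequality into a recurrence on $r_n :=\sum_{k\ge n}\|u_{k+1}-u_k\|$.  The three cases are then standard.  If $\theta=0$ the KL inequality forces $\dist(0,\partial\mathcal F(u_n))\ge c^{-1}$ on a tail where $\mathcal F(u_n)>\mathcal F(u^*)$, contradicting our subgradient bound unless the algorithm terminates in finitely many steps.  If $\theta\in(0,\tfrac12]$ the recurrence is of contraction type $r_{n+1}\le \zeta r_n$ for some $\zeta\in(0,1)$, giving geometric decay of $r_n$, hence of $\|u_n-u^*\|\le r_n$, and of $|F(z_n)-F(z^*)|=|\mathcal F(u_n)-\mathcal F(u^*)|$ via (iii).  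If $\theta\in(\tfrac12,1)$ the recurrence is $r_{n+1}\le r_n-A r_n^{1/(1-\theta)}$, which a standard comparison with an ODE solution yields the rate $r_n=O(n^{-(1-\theta)/(2\theta-1)})$, and the same bound transfers to the function-value gap through $|\mathcal F(u_n)-\mathcal F(u^*)|^{1-\theta}\le c'\,\dist(0,\partial\mathcal F(u_n))\le C\,(r_{n-1}-r_n)$.  These are exactly the rates claimed.
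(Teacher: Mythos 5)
Your overall architecture is the right one and matches the paper's: sufficient decrease of $\mathcal F$ (from Theorem~\ref{theorem:sub_convergence}\ref{sub_convergence_descend}), a relative-error/subgradient bound obtained by substituting the optimality conditions of Lemma~\ref{l:Rela} into $\partial\mathcal F$, and the continuity condition at cluster points, all fed into the KL machinery. The paper evaluates the subgradient at $v_n$ and gets $\dist(0,\partial\mathcal F(v_n))\le C(\Delta_n+\Delta_{n-1})$ with $\Delta_n=\|x_{n+1}-x_n\|+\tau\|w_{n+1}-w_n\|$, then invokes the abstract convergence framework of \cite{BDL22} (which is designed exactly for such two-index error bounds) rather than re-running the Attouch--Bolte--Svaiter telescoping by hand; your forward-shifted bound at $u_{n+1}$ is an equivalent reindexing and your remark that the machinery absorbs the shift is correct. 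Parts \ref{t:full_cvg}, \ref{t:full_rate_finite}, and the iterate rates in \ref{t:full_rate_linear}--\ref{t:full_rate_sublinear} would go through along your lines.

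There is, however, a genuine gap in your derivation of the rate for the \emph{function values}. You assert $|F(z_n)-F(z^*)|=|\mathcal F(u_n)-\mathcal F(u^*)|$ ``via (iii)''. This is false as a termwise identity: $\mathcal F(v_n)=f(x_n)+h(z_n)+g^*(w_n)-\langle w_n,z_n\rangle+\cdots$ whereas $F(z_n)=f(z_n)+h(z_n)-g(z_n)$, and the continuity condition only gives that the two sequences share the same \emph{limit}, not that they agree along the iterates. The KL rate applies to $\mathcal F(v_n)-\mathcal F(v^*)$, so to transfer it to $F(z_n)-F(z^*)$ one must separately control the gap $|\mathcal F(v_n)-F(z_n)|$. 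The paper does this through the two-sided estimate
\begin{align*}
|\mathcal F(v_n)-F(z_n)|\le \|t_n-w_n\|\bigl(\|z_n-z_{n-1}\|+\tau\|w_n-w_{n-1}\|\bigr)+\Bigl(\tfrac{1}{2\gamma}+\tfrac{\ell}{2}\Bigr)\|x_n-z_n\|^2,
\end{align*}
with $t_n\in\partial g(z_n)$ and $(t_n),(w_n)$ bounded, and then bounds $\|z_n-z_{n-1}\|$, $\|w_n-w_{n-1}\|$, and $\|x_n-z_n\|$ by the already-established iterate rates. Note the consequence: $|\mathcal F(v_n)-\mathcal F(v^*)|$ decays like $\zeta^{n}$ in the case $\theta\in(0,\tfrac12]$, but $|F(z_n)-F(z^*)|$ is only shown to decay like $\zeta^{n/2}$ because the bridging terms decay at the iterate rate $\zeta^{n/2}$ --- which is exactly the exponent stated in the theorem. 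Your claimed identity would (incorrectly) yield the faster rate $\zeta^{n}$, so this step cannot be waved through; you need the explicit comparison between $\mathcal F(v_n)$ and $F(z_n)$ to close the argument.
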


\begin{proof}
For each $n\in\mathbb{N}$, set $v_n =(x_n, y_n, z_n, w_n)$ and $\Delta_n =\|x_{n+1} -x_n\| +\tau\|w_{n+1} -w_n\|$. Let $n\in \mathbb{N}^*$. We derive from Theorem~\ref{theorem:sub_convergence} that
\begin{align}\label{eq:F_descend_global}
\mathcal{F}(v_{n+1}) +\frac{\delta}{2}\|x_{n+1} -x_n\|^2 +\frac{\tau}{2}\|w_{n+1} -w_n\|^2 \leq \mathcal{F}(v_{n}) \text{~~with~} \delta :=\frac{2 -\nu -\nu\rho\gamma -2\ell^2\gamma^2}{\nu\gamma} >0,
\end{align}
that $v_{n+1} -v_n\to 0$ as $n \to +\infty$, that $(v_n)_{n\in \mathbb{N}}$ is bounded, and for every cluster point $\overline{v} =(\overline{x},\overline{y},\overline{z},\overline{w})$ of $(v_n)_{n\in \mathbb{N}}$, one has $\overline{x} =\overline{z}$, and $\mathcal{F}(v_n) \to \mathcal{F}(\overline{v})={\color{black}\mathbf{F}(\overline{z})}$ as $n \to +\infty$. Next, we observe that
\begin{align*}
\Delta_n^2 \leq \left(\frac{2}{\delta} +2\tau\right)\left(\frac{\delta}{2}\|x_{n+1} -x_n\|^2 +\frac{\tau}{2}\|w_{n+1} -w_n\|^2\right).
\end{align*}
By combining with \eqref{eq:F_descend_global},
\begin{align*}
\mathcal{F}(v_{n+1}) +\frac{\delta}{2 +2\delta\tau}\Delta_n^2 \leq \mathcal{F}(v_n).
\end{align*}

We now see that $\partial \mathcal{F}(v_n) =(\partial^x \mathcal{F}(v_n),\partial^y \mathcal{F}(v_n),\partial^z \mathcal{F}(v_n),\partial^w \mathcal{F}(v_n))$, where
\begin{align}\label{eq:subdifferential_of_Lyapunov}
\begin{cases}
\partial^x \mathcal{F}(v_n) &=\nabla f(x_n) +\frac{1}{\gamma}(x_n -y_n) +\frac{2(1 -\nu)}{\gamma}(x_n -z_n),\\
\partial^y \mathcal{F}(v_n) &=-\frac{1}{\gamma}(x_n -y_n) -\frac{1}{\gamma}(y_n -z_n) = -\frac{1}{\gamma}(x_n -z_n),\\
\partial^z \mathcal{F}(v_n) &=\partial h(z_n) -w_n + \frac{1}{\gamma}(y_n -z_n) -\frac{2(1 -\nu)}{\gamma}(x_n -z_n),\\
\partial^w \mathcal{F}(v_n) &=\partial g^*(w_n) -z_n.
\end{cases}
\end{align}
It follows from Lemma~\ref{l:Rela}\ref{l:Rela_y}--\ref{l:Rela_w} and the updating step of $y_{n+1}$ that
\begin{align*}
\begin{cases}
-\frac{1}{\gamma}(x_n -y_{n-1}) &= \nabla f(x_n),\\
z_{n-1} -\tau (w_n -w_{n-1}) &\in \partial g^* (w_n),\\
w_n +\frac{1}{\gamma}(x_n -z_n) +\frac{1}{\gamma}(x_n -y_{n-1}) &\in \partial h(z_n),\\
-\frac{1}{\nu}(y_n -y_{n-1}) &= x_n -z_n.
\end{cases}
\end{align*}
Substitute this to \eqref{eq:subdifferential_of_Lyapunov}, we obtain that
\begin{align*}
\begin{cases}
-\frac{2-\nu}{\nu\gamma}(y_n -y_{n-1}) &=\partial^x \mathcal{F}(v_n), \\
\frac{1}{\nu\gamma}(y_n-y_{n-1}) &= \partial^y \mathcal{F}(v_n), \\
-\frac{1}{\gamma}(y_n -y_{n-1}) &\in \partial^z \mathcal{F}(v_n), \\
-(z_n -z_{n-1}) -\tau (w_n -w_{n-1}) &\in \partial^w \mathcal{F}(v_n).
\end{cases}
\end{align*}
By Lemma~\ref{l:Rela}\ref{l:Rela_yy}\&\ref{l:Rela_zz}, we further derive that
\begin{align*}
\dist(0, \partial^x \mathcal{F}(v_n)) &\leq \frac{(2-\nu)(1 +\gamma\ell)}{\nu\gamma} \|x_{n+1} -x_n\|, \\
\dist(0, \partial^y \mathcal{F}(v_n)) &\leq \frac{1 +\gamma\ell}{\nu\gamma} \|x_{n+1} -x_n\|, \\
\dist(0, \partial^z \mathcal{F}(v_n)) &\leq  \frac{1 +\gamma\ell}{\gamma} \|x_{n+1} -x_n\|,\\
\dist(0, \partial^w \mathcal{F}(v_n)) &\leq \frac{1 +\gamma\ell}{\nu}\|x_{n+1} -x_n\| +\left(1 +\frac{1 +\gamma\ell}{\nu}\right)\|x_n -x_{n-1}\| + \tau \|w_n-w_{n-1}\|.
\end{align*}
Therefore, there exists a constant $C \in \mathbb{R}_{++}$ such that
\begin{align*}
{\color{black}\dist (0,\partial\mathcal{F}(v_n)) } \leq C(\|x_{n+1} -x_n\| +\|x_n -x_{n-1}\| +\tau\|w_n -w_{n-1}\|) \leq C(\Delta_n +\Delta_{n-1}).
\end{align*}

\ref{t:full_cvg}: It can be verified that all the conditions in the \emph{abstract convergence} framework \cite{BDL22} are satisfied with $I =\{0,1\}$, $\lambda_1=\lambda_2=1/2$, $\alpha_n \equiv \frac{\delta}{2 +2\delta\tau}$, {\color{black}$\beta_n \equiv 1/(2C)$}, and $\varepsilon_n \equiv 0 $. By \cite[Theorem~5.1(i)]{BDL22},
\begin{align*}
\sum_{n=0}^{+\infty} (\|x_{n+1} -x_{n}\| +\tau\|w_{n+1}-w_{n}\|) =\sum_{n=0}^{+\infty} \Delta_n < +\infty,
\end{align*}
which implies that $\sum_{n=0}^{+\infty} \|x_{n+1}-x_{n}\| <+\infty$ and $\sum_{n=0}^{+\infty} \|w_{n+1}-w_{n}\| < +\infty$. Combining with Lemma~\ref{l:Rela}\ref{l:Rela_yy}\&\ref{l:Rela_zz}, we derive that 
\begin{align*}
\sum_{n=0}^{+\infty}\|v_n -v^*\| < +\infty,
\end{align*}
which implies the convergence of $(v_n)_{n\in \mathbb{N}}$ to $v^* =(x^*,y^*,z^*,w^*)$. In view of Theorem~\ref{theorem:sub_convergence}\ref{bounded_of_sequence}, $x^* =z^*$ and $0\in \nabla f(z^*) +\partial h(z^*) -\partial g(z^*)$.

\ref{t:full_rate_finite}: This follows from the arguments in the proof of \cite[Theorem~5.1]{BDL22} and \cite[Theorem~2(i)]{Attouch2007}.

\ref{t:full_rate_linear}: By \cite[Theorem~5.1(iv)]{BDL22}, there exist $\Gamma_0\in \mathbb{R}_{++}$ and $\zeta\in \left(0,1\right)$ such that, for all $n\in \mathbb{N}^*$, 
\begin{align*}
\|(x_n, w_n) -(x^*, w^*)\|\leq \Gamma_0\zeta^{\frac{n}{2}}, \text{~~and~~} |\mathcal{F}(v_n) -\mathcal{F}(v^*)|\leq \Gamma_0\zeta^n.  
\end{align*}
It follows that, for all $n\in \mathbb{N}^*$, $\|x_n -x^*\|\leq \Gamma_0\zeta^{\frac{n}{2}}$ and $\|w_n -w^*\|\leq \Gamma_0\zeta^{\frac{n}{2}}$. By passing to the limit in Lemma~\ref{l:Rela}\ref{l:Rela_y}, $y^* =x^* +\gamma\nabla f(x^*)$. Again using Lemma~\ref{l:Rela}\ref{l:Rela_y} and then the Lipschitz continuity of $\nabla f$, we see that, for all $n\in \mathbb{N}^*$,
\begin{align*}
\|y_{n-1} -y^*\|\leq \|x_n -x^*\| +\gamma\|\nabla f(x_n) -\nabla f(x^*))\|\leq (1 +\gamma\ell)\|x_n -x^*\| \leq (1 +\gamma\ell)\Gamma_0\zeta^{\frac{n}{2}}
\end{align*}
and then
\begin{align*}
\|y_n -y_{n-1}\|\leq \|y_n -y^*\| +\|y_{n-1} -y^*\|\leq (1 +\zeta^{\frac{1}{2}})(1 +\gamma\ell)\Gamma_0\zeta^{\frac{n}{2}} < 2(1 +\gamma\ell)\Gamma_0\zeta^{\frac{n}{2}}.    
\end{align*}
Since $x^* =z^*$, the updating step of $y_{n+1}$ implies that, for all $n\in \mathbb{N}^*$, $z_n -z^* =\frac{1}{\nu}(y_n -y_{n-1}) +(x_n - x^*)$, and so
\begin{align*}
\|z_n -z^*\| \leq \|x_n -x^*\| +\frac{1}{\nu}\|y_n -y_{n-1}\| \leq \left(1 +\frac{2(1 +\gamma\ell)}{\nu}\right)\Gamma_0\zeta^{\frac{n}{2}}.
\end{align*}
Consequently,

\begin{align*}
\|v_n -v^*\| \leq \|x_n -x^*\| +\|y_n -y^*\| +\|z_n -z^*\| +\|w_n -w^*\| \leq \left( 3 +2\left(1 +\frac{1}{\nu}\right)(1 +\gamma\ell) \right)\Gamma_0\zeta^{\frac{n}{2}}.
\end{align*}


Next, from \eqref{eq:Fn}, \eqref{eq:Fn'}, and \eqref{eq:wntn bounded} in the proof of Theorem~\ref{theorem:sub_convergence}\ref{bounded_of_sequence}, we see that, for all $n\in \mathbb{N}^*$, 
\begin{align*}
|\mathcal{F}(v_n) -{\color{black}\mathbf{F}(z_n)}|\leq \|t_n -w_n\|(\|z_n -z_{n-1}\| +\tau \|w_n -w_{n-1}\|) +\left(\frac{1}{2\gamma} +\frac{\ell}{2}\right)\|x_n -z_n\|^2
\end{align*}
and that $(t_n)_{n\in\mathbb{N}^*}$ and $(w_n)_{n\in\mathbb{N}^*}$ are bounded. On the other hand, for all $n\geq 2$,
\begin{align*}
\|z_n -z_{n-1}\| &\leq \|z_n -z^*\| +\|z_{n-1} -z^*\|\leq (1 +\zeta^{-\frac{1}{2}})\left(1 +\frac{2(1 +\gamma\ell)}{\nu}\right)\Gamma_0\zeta^{\frac{n}{2}}, \\
\|w_n -w_{n-1}\| &\leq \|w_n -w^*\| +\|w_{n-1} -w^*\|\leq (1 +\zeta^{-\frac{1}{2}})\Gamma_0\zeta^{\frac{n}{2}}, \\
\|x_n -z_n\| &= \frac{1}{\nu}\|y_n -y_{n-1}\|\leq \frac{2(1 +\gamma\ell)}{\nu}\Gamma_0\zeta^{\frac{n}{2}}. 
\end{align*}
Therefore, there exists $\Gamma_1 \in (0,+\infty)$ such that, for all $n\in\mathbb{N}^*$, $|\mathcal{F}(v_n) -{\color{black}\mathbf{F}(z_n)}|\leq \Gamma_1\zeta^{\frac{n}{2}}$. Since $\mathcal{F}(v^*) ={\color{black}\mathbf{F}(z^*)}$, we now have 
\begin{align*}
|{\color{black}\mathbf{F}(z_n) -\mathbf{F}(z^*)}|\leq |{\color{black}\mathbf{F}(z_n)} -\mathcal{F}(v_n)| +|\mathcal{F}(v_n) -\mathcal{F}(v^*)|\leq (\Gamma_1 +\Gamma_0\zeta^{\frac{n}{2}})\zeta^{\frac{n}{2}}\leq (\Gamma_0 +\Gamma_1)\zeta^{\frac{n}{2}}.
\end{align*}
By setting $\Gamma =\max\left\{\left( 3 +2\left(1 +\frac{1}{\nu}\right)(1 +\gamma\ell) \right)\Gamma_0, \Gamma_0 +\Gamma_1\right\}$, we obtain the conclusion.

\ref{t:full_rate_sublinear}: Proceeding as in \cite[Theorem~5.1(iv)]{BDL22} and \cite[Theorem~2(iii)]{Attouch2007}, we find $\Gamma_0 \in \mathbb{R}_{++}$ such that, for all $n\in \mathbb{N}^*$, 
\begin{align*}
\|(x_{n},w_{n}) -(x^*,w^*)\|\leq \Gamma_0 n^{-\frac{1-\theta}{2\theta-1}}\text{~~and~~} |\mathcal{F}(v_n) -\mathcal{F}(v^*)|\leq \Gamma_0 n^{-\frac{2-2\theta}{2\theta-1}}.
\end{align*}
By following the same line of arguments as in \ref{t:full_rate_linear}, we also obtain that, for all $n\in \mathbb{N}^*$,
\begin{align*}
\|v_n -v^*\| \leq \left( 3 +2\left(1 +\frac{1}{\nu}\right)(1 +\gamma\ell) \right)\Gamma_0 n^{-\frac{1-\theta}{2\theta-1}}
\end{align*}
and that, for all $n\geq 2$,
\begin{align*}
\|z_n -z_{n-1}\| &\leq \left( 1 +\frac{2(1+\gamma\ell)}{\nu} \right)\Gamma_0 \left( n^{-\frac{1-\theta}{2\theta-1}} +(n-1)^{-\frac{1-\theta}{2\theta-1}} \right), \\
\|w_n -w_{n-1}\| &\leq \Gamma_0\left( n^{-\frac{1-\theta}{2\theta-1}} +(n-1)^{-\frac{1-\theta}{2\theta-1}} \right), \\
\|x_n -z_n\| & \leq \frac{2(1+\gamma\ell)}{\nu}\Gamma_0 n^{-\frac{1-\theta}{2\theta-1}}. 
\end{align*}
We observe that $n-1 \geq \frac{1}{2}n$ for $n\geq 2$ and that $\frac{1-\theta}{2\theta-1} >0$ since $\theta \in (\frac{1}{2},1)$. Thus, for all $n\geq 2$, $(n-1)^{-\frac{1-\theta}{2\theta-1}}\leq 2^{\frac{1-\theta}{2\theta-1}}n^{-\frac{1-\theta}{2\theta-1}}$. As in \ref{t:full_rate_linear}, there exists $\Gamma_1 \in (0,+\infty)$ such that, for all $n\in\mathbb{N}^*$, $|\mathcal{F}(v_n) -{\color{black}\mathbf{F}(z_n)}|\leq \Gamma_1 n^{-\frac{1-\theta}{2\theta-1}}$, which leads to 
\begin{align*}
|{\color{black}\mathbf{F}(z_n) -\mathbf{F}(z^*)}|\leq (\Gamma_1 +\Gamma_0 n^{-\frac{1-\theta}{2\theta-1}}) n^{-\frac{1-\theta}{2\theta-1}}\leq (\Gamma_0 +\Gamma_1)n^{-\frac{1-\theta}{2\theta-1}},
\end{align*}
Setting $\Gamma =\max\left\{\left( 3 +2\left(1 +\frac{1}{\nu}\right)(1 +\gamma\ell) \right)\Gamma_0, \Gamma_0 +\Gamma_1\right\}$, we complete the proof.
\end{proof}

To conclude this section, we observe that Theorem~\ref{t:full_sequence_convergence} requires $\mathcal{F}$ to be a KL function to ensure the convergence of the full sequence generated by Algorithm~\ref{algo:CDR}. Notably, if the objective function $F$ is semi-algebraic, then $\mathcal{F}$ is also semi-algebraic and hence a KL function with exponent $\theta \in [0, 1)$; see, e.g., \cite[Example~1]{Attouch2007}.

\section{Numerical results}
\label{sec:numerical_result}

All of the experiments are performed in MATLAB R2023b on a 64-bit laptop with Intel(R) Core(TM) i7-1165G7 CPU (2.80GHz) and 32GB of RAM. 

We consider a compressed sensing problem to evaluate the performance of the proposed BDR. Practical applications of compressed sensing are presented in \cite{CSAPPS,Babakmehr2018}. One important application of compressed sensing is recovering a sparse signal from a set of measurements. A basic mathematical model for this application is the \emph{basic pursuit problem}, which is also known as the \emph{Lasso problem} or $\ell_1-$\emph{regularized problem}, as below \cite{Tibshirani1996}:
\begin{align}
    \min_{x\in \mathbb{R}^{d}}~~\frac{1}{2}\|Ax-b\|^2 + \lambda \|x\|_1, \tag{CS-1}
\end{align}
where $\lambda\in \mathbb{R}_{++}$ is a regularization parameter, $A\in \mathbb{R}^{m\times d}$ is a sensing matrix, and $b \in \mathbb{R}^{d} \setminus \{0\}$. The research in \cite{Lou2017} has shown that the $\ell_{1-2}$, which is the difference of $\ell_1$ and $\ell_2$, has better performance. The model in \cite{Lou2017} is as
\begin{align}\label{ex:CSM}
    \min_{x\in \mathbb{R}^{d}}~~\frac{1}{2}\|Ax-b\|^2 + \lambda (\|x\|_1-\|x\|). \tag{CS-2}
\end{align}
Now problem \eqref{ex:CSM} takes the form of \eqref{eq:P} with $f(x)=\frac{1}{2}\|Ax-b\|^2$, $h(x)=\lambda \|x\|_1$, and $g(x)=\lambda \|x\|$. {\color{black}Note that the objective function in (CS-2) is semialgebraic, and hence satisfies the KL property}. The updating steps of the BDR for solving \eqref{ex:CSM} are as
\begin{align*}
    \begin{cases}
        x_{n+1}&=(A^\top A + \frac{1}{\gamma}I)^{-1}(A^\top b +\frac{y_n}{\gamma}),\\
        w_{n+1}&=\min \left\{ \frac{\lambda}{\|\tau w_n +z_n\|},\frac{1}{\tau}\right\}(\tau w_n +z_n),\\
        z_{n+1}&=\mathcal{S}_{\gamma \lambda}(2x_{n+1} - z_n + \gamma w_{n+1})\\
        y_{n+1} &=y_n +\nu(z_{n+1} -x_{n+1}),
    \end{cases}
\end{align*}
where the updating step of $w_{n+1}$ is derived from \eqref{eq:Moreau} and the proximity operator of the $\ell_2$ norm given in \cite[Example~24.20]{Bauschke2017_book} as
\begin{align*}
\prox_{\tau g}(\tau w_n +z_n)=\prox_{\tau\lambda \|\cdot\|}(\tau w_n +z_n) =\begin{cases}
\tau w_n +z_n-\frac{\tau \lambda (\tau w_n +z_n)}{\|\tau w_n +z_n\|} \quad &\text{if~} \|\tau w_n +z_n\|>\tau \lambda, \\
0 \quad &\text{if~} \|\tau w_n +z_n\| \leq \tau \lambda,
\end{cases}
\end{align*} 
while $\mathcal{S}_{\gamma \lambda}(\cdot)$ is the \emph{soft shrinkage} operator \cite[Example 24.22]{Bauschke2017_book} given by, for all $i=1,\dots,d$,  
\begin{align*}
\color{black}
(\mathcal{S}_{\gamma \lambda}(u))_i =\operatorname{sign}(u_i) \max\{0, \lvert u_i\rvert-\gamma\lambda\}.
\end{align*}%

  

In Section~\ref{subsec:sparse_recon_random_load}, we evaluate the performance of the BDR and compare it with notable algorithms in the literature, in terms of sparse signal reconstruction, on both randomly generated data and a public dataset. In Section~\ref{subsec:profile_reconstruction}, we compare the performance of the BDR with the benchmark methods in terms of reconstructing signals from incomplete signals, taken into account their sparse representation.

\subsection{Sparse signal reconstruction on synthetic data and real power system voltage data}\label{subsec:sparse_recon_random_load}
In this numerical experiment, we randomly generate the matrix $A$ and the ground truth sparse vector $x_g$ based on the method given in \cite{Lou2017}. Let $b=Ax_g + 10^{-3} * z$, where $z \in \mathbb{R}^d$ is a vector with i.i.d. standard Gaussian entries. We consider randomly generated Gaussian matrices and discrete cosine transform (DCT) matrices. For each case, we consider different matrix sizes of $m \times d$ with sparsity level $s$ as given in Table~\ref{tab:caseCS}. 
\begin{table}[H]
\caption{Test cases}
\label{tab:caseCS}
\begin{center}
\begin{tabular}{c|c|ccc|c|c|ccc}
\hline
Sensing matrix type        & Case & $m$  & $d$   & $s$ & Sensing matrix type   & Case & $m$  & $d$   & $s$ \\ \hline
\multirow{10}{*}{Gaussian} & 1    & 360  & 1280  & 40  & \multirow{10}{*}{DCT} & 11   & 360  & 1280  & 40  \\
                           & 2    & 720  & 2560  & 80  &                       & 12   & 720  & 2560  & 80  \\
                           & 3    & 1080 & 3840  & 120 &                       & 13   & 1080 & 3840  & 120 \\
                           & 4    & 1440 & 5120  & 160 &                       & 14   & 1440 & 5120  & 160 \\
                           & 5    & 1800 & 6400  & 200 &                       & 15   & 1800 & 6400  & 200 \\
                           & 6    & 2160 & 7680  & 240 &                       & 16   & 2160 & 7680  & 240 \\
                           & 7    & 2520 & 8960  & 280 &                       & 17   & 2520 & 8960  & 280 \\
                           & 8    & 2880 & 10240 & 320 &                       & 18   & 2880 & 10240 & 320 \\
                           & 9    & 3240 & 11520 & 360 &                       & 19   & 3240 & 11520 & 360 \\
                           & 10   & 3600 & 12800 & 400 &                       & 20   & 3600 & 12800 & 400 \\ \hline
\end{tabular}
\end{center}
\end{table}

 For the BDR, we set $\tau =20$, and $\rho=0$ (as $f$ is convex). The parameter $\gamma$ is set to
\begin{align*}
\gamma = \frac{-\nu\rho +\sqrt{\nu^2\rho^2 -8(\nu -2)\ell^2}}{4\ell^2} -10^{-10},
\end{align*}
 where $\ell$ is maximum eigenvalue of the symmetric matrix $A^\top A$. We compare our algorithm with the following algorithms
\begin{itemize}
    \item Alternating direction method of multipliers (ADMM) with fast $\ell_{1-2}$ proximal operator in \cite{Lou2017};
    \item Unified Douglas--Rachford algorithm for difference of convex (DR-DC) in \cite{Chuang2021};
    \item Proximal difference-of-convex algorithm with extrapolation (pDCAe) in \cite{Wen2017};
    \item Hybrid Bregman alternating direction method of multipliers (HBADMM) in \cite{Tu2019}.
\end{itemize}
The parameters of these algorithms are set to the same ones given in their corresponding references. We set $\lambda=0.1$ and run all algorithms, initialized at the origin, for 30 times with a maximum of 3000 iterations. At each time we randomly regenerate $A$ and $x_g$. The stopping condition for all algorithms is $\frac{\|z_{n+1}-z_n\|}{\|z_n\|}<10^{-6}$. To finetune and choose $\nu$, we take Case 1 in Table~\ref{tab:caseCS} for an empirical study. We run the BDR on Case 1 with different values of $\nu$ and observe its convergence. Figure~\ref{fig:fine_tune_nu} shows the average results after 30 runs of this empirical study, hence we choose $\nu=1.4$. In Tables~\ref{tab: CPU_time_iter}\&\ref{tab:errorvsground}, we report the CPU time, the number of iterations, the error to the ground truth at termination (defined as $\frac{\|z_{n+1}-x_g\|}{\|x_g\|}$), averaged over 30 random instances. It is seen that in terms of solution quality, the BDR has the same error values of the other algorithms. However, the average running time of the BDR is shorter than the average running time of all other algorithms in these tables. 

\begin{figure}[H]
\centering
\includegraphics[width=0.8\linewidth]{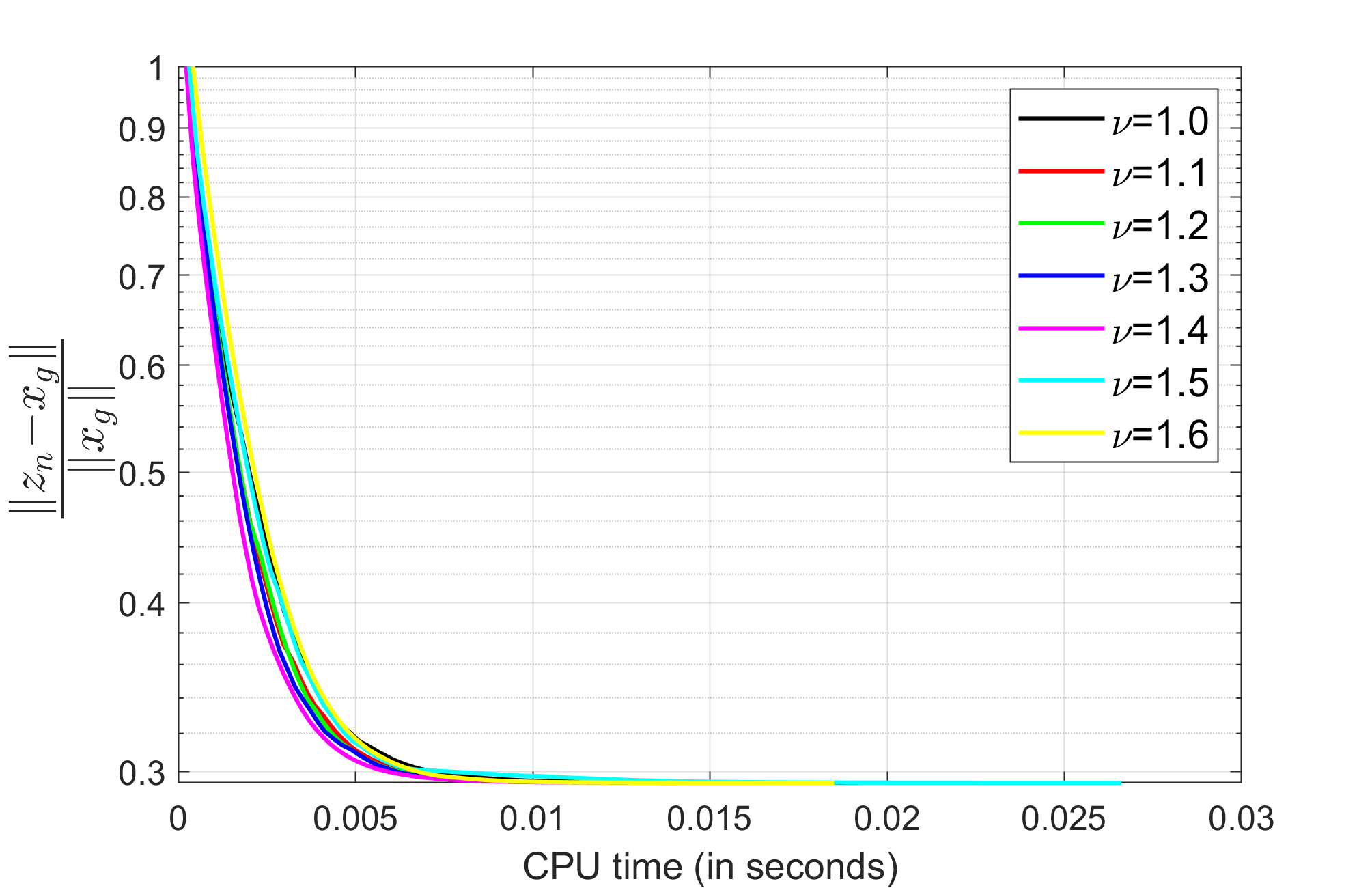}
\caption{Convergence of the BDR with different $\nu$ values.}
\label{fig:fine_tune_nu}
\end{figure}

\begin{table}[H]
\centering
\caption{CPU time and number of iterations required for each algorithm, averaged over 30 random generated instances}
\label{tab: CPU_time_iter}
\setlength{\tabcolsep}{1.5pt}
\begin{tabular}{c|ccccc|ccccc}
\hline
\multicolumn{1}{l|}{}     & \multicolumn{5}{c|}{CPU time (in seconds)}                                                                                                                     & \multicolumn{5}{c}{Number of iterations}                                                                                                                    \\ \hline
\multicolumn{1}{l|}{Case} & \multicolumn{1}{l}{ADMM} & \multicolumn{1}{l}{DR-DC} & \multicolumn{1}{l}{pDCAe} & \multicolumn{1}{l}{HBADMM} & \multicolumn{1}{l|}{\textbf{BDR}} & \multicolumn{1}{l}{ADMM} & \multicolumn{1}{l}{DR-DC} & \multicolumn{1}{l}{pDCAe} & \multicolumn{1}{l}{HBADMM} & \multicolumn{1}{l}{\textbf{BDR}} \\ \hline
1                         & 0.25                     & 0.15                      & 0.05                      & 0.11                       & \textbf{0.04}                              & 1206                     & 727                       & 128                       & \textbf{96}                        & 144                             \\
2                         & 1.53                     & 0.89                      & 0.37                      & 0.46                       & \textbf{0.26}                             & 1143                     & 687                       & 134                       &\textbf{80}                         & 198                              \\
3                         & 4.01                     & 2.41                      & 0.89                      & 1.30                       & \textbf{0.69}                              & 1080                     & 649                       & 132                       & \textbf{89}                         & 186                              \\
4                         & 7.69                     & 4.47                      & 1.79                      & 2.62                       & \textbf{1.36}                              & 1096                     & 657                       & 138                       & \textbf{101}                        & 187                              \\
5                         & 10.44                    & 6.25                      & 2.36                      & 2.74                       & \textbf{1.76}                              & 1057                     & 634                       & 135                       & \textbf{75}                         & 181                              \\
6                         & 14.71                    & 8.77                      & 3.52                      & 5.32                       & \textbf{2.50}                              & 1069                     & 640                       & 139                       & \textbf{103}                       & 182                              \\
7                         & 19.74                    & 11.81                     & 4.58                      & 5.2                        & \textbf{3.36}                              & 1047                     & 627                       & 138                       & \textbf{75}                         & 178                              \\
8                         & 25.16                    & 15.13                        & 6.14                      & 6.59                       & \textbf{4.27}                              & 1039                     & 622                       & 138                       & \textbf{72}                         & 177                              \\
9                         & 32.78                    & 19.56                     & 7.84                       & 9.82                        & \textbf{5.56}                              & 1042                     & 624                       & 139                       & \textbf{85}                         & 177                              \\
10                        & 40.25                    & 23.98                     & 9.78                      & 11.36                      & \textbf{6.81}                              & 1038                     & 621                       & 138                       & \textbf{80}                        & 176                        \\
\hline   
11                        & 0.11                     & 0.07                      & 0.03                      & 0.09                       & \textbf{0.02}                              & 545                      & 323                       & \textbf{72}                        & 76                         & 90                               \\
12                        & 0.63                     & 0.37                      & 0.19                      & 0.43                       & \textbf{0.11}                              & 531                      & 315                       & \textbf{72}                        & 80                         & 87                               \\
13                        & 1.68                     & 0.97                      & 0.45                      & 1.26                       & \textbf{0.27}                              & 522                      & 309                       & \textbf{73}                        & 99                         & 86                               \\
14                        & 3.18                     & 1.85                      & 0.82                      & 2.47                       & \textbf{0.52}                              & 526                      & 312                       & \textbf{74}                        & 107                        & 86                               \\
15                        & 4.99                     & 2.94                      & 1.24                      & 4.23                       & \textbf{0.81}                              & 523                      & 310                       & \textbf{74}                        & 118                        & 86                               \\
16                        & 7.34                     & 4.35                      & 1.96                       & 6.42                       & \textbf{1.23}                               & 520                      & 308                       & \textbf{73}                        & 120                        & 85                               \\
17                        & 10.42                    & 6.05                      & 2.71                      & 9.72                       & \textbf{1.68}                              & 518                      & 307                       & \textbf{74}                        & 133                        & 85                               \\
18                        & 13.48                    & 7.88                      & 3.55                      & 12.41                      & \textbf{2.25}                              & 520                      & 308                       & \textbf{74}                        & 130                        & 85                               \\
19                        & 16.58                    & 9.93                       & 4.23                      & 16.07                      & \textbf{2.76}                              & 518                      & 307                       & \textbf{74}                        & 135                        & 85                               \\
20                        & 20.25                    & 11.96                     & 5.24                      & 22.37                      & \textbf{3.31}                              & 517                      & 306                       & \textbf{74}                        & 156                        & 84                               \\ \hline
\end{tabular}
\end{table}

\begin{table}[H]
\centering
\caption{Error vs ground truth, averaged over 30 random instances}
\label{tab:errorvsground}
\begin{tabular}{c|ccccc}
\hline
Case & ADMM     & DR-DC    & pDCAe    & HBADMM   & \textbf{BDR} \\ \hline
1    & 3.08E-01 & 3.08E-01 & 3.08E-01 & 3.08E-01 & 3.08E-01    \\
2    & 6.08E-01 & 6.08E-01 & 6.08E-01 & 6.08E-01 & 6.08E-01     \\
3    & 6.17E-01 & 6.17E-01 & 6.17E-01 & 6.17E-01 & 6.17E-01     \\
4    & 6.28E-01 & 6.28E-01 & 6.28E-01 & 6.28E-01 & 6.28E-01     \\
5    & 6.24E-01 & 6.24E-01 & 6.24E-01 & 6.24E-01 & 6.24E-01     \\
6    & 6.29E-01 & 6.29E-01 & 6.29E-01 & 6.29E-01 & 6.29E-01     \\
7    & 6.20E-01 & 6.20E-01 & 6.20E-01 & 6.20E-01 & 6.20E-01     \\
8    & 6.25E-01 & 6.25E-01 & 6.25E-01 & 6.25E-01 & 6.25E-01     \\
9    & 6.22E-01 & 6.22E-01 & 6.22E-01 & 6.22E-01 & 6.22E-01     \\
10   & 6.29E-01 & 6.29E-01 & 6.29E-01 & 6.29E-01 & 6.29E-01     \\ \hline
11   & 3.10E-01 & 3.10E-01 & 3.10E-01 & 3.10E-01 & 3.10E-01     \\
12   & 3.22E-01 & 3.22E-01 & 3.22E-01 & 3.22E-01 & 3.22E-01     \\
13   & 3.29E-01 & 3.29E-01 & 3.29E-01 & 3.29E-01 & 3.29E-01     \\
14   & 3.25E-01 & 3.25E-01 & 3.25E-01 & 3.25E-01 & 3.25E-01     \\
15   & 3.28E-01 & 3.28E-01 & 3.28E-01 & 3.28E-01 & 3.28E-01     \\
16   & 3.28E-01 & 3.28E-01 & 3.28E-01 & 3.28E-01 & 3.28E-01     \\
17   & 3.31E-01 & 3.31E-01 & 3.31E-01 & 3.31E-01 & 3.31E-01     \\
18   & 3.31E-01 & 3.31E-01 & 3.31E-01 & 3.31E-01 & 3.31E-01     \\
19   & 3.31E-01 & 3.31E-01 & 3.31E-01 & 3.31E-01 & 3.31E-01     \\
20   & 3.32E-01 & 3.31E-01 & 3.31E-01 & 3.31E-01 & 3.31E-01     \\ \hline
\end{tabular}
\end{table}

Next, we evaluate the performance of the BDR on a public dataset. We extract the sinusoidal voltage signal from the ``Electrical Signals Databases''\footnote{This dataset can be found at \url{https://github.com/rte-france/digital-fault-recording-database}.}. This signal is sparse when the \textit{discrete cosine transform} (DCT) is applied to it, as shown in Figure~\ref{fig:voltage_sparsity}. We take 12800 samples (corresponding to the highest $d$ value in Table~\ref{tab:caseCS}), and then compare the performance of the BDR and other algorithms on this signal, based on the same experimental setup described above. In this case, each time we take a signal of length $d$ and apply the DCT to it to obtain $x_g$. All of the other parameters are set to the same values as in the previous numerical experiment. From Tables~\ref{tab: CPU_time_iter_vol}\&\ref{tab:errorvsground_voltage}, it is seen that the BDR converges faster than all other algorithms in all cases while maintaining the same solution quality. This comparison shows the higher computational efficiency of BDR, which is a key issue in practice for the operation of electric power systems.
\begin{figure}[H]
\centering
\includegraphics[width=0.45\linewidth]{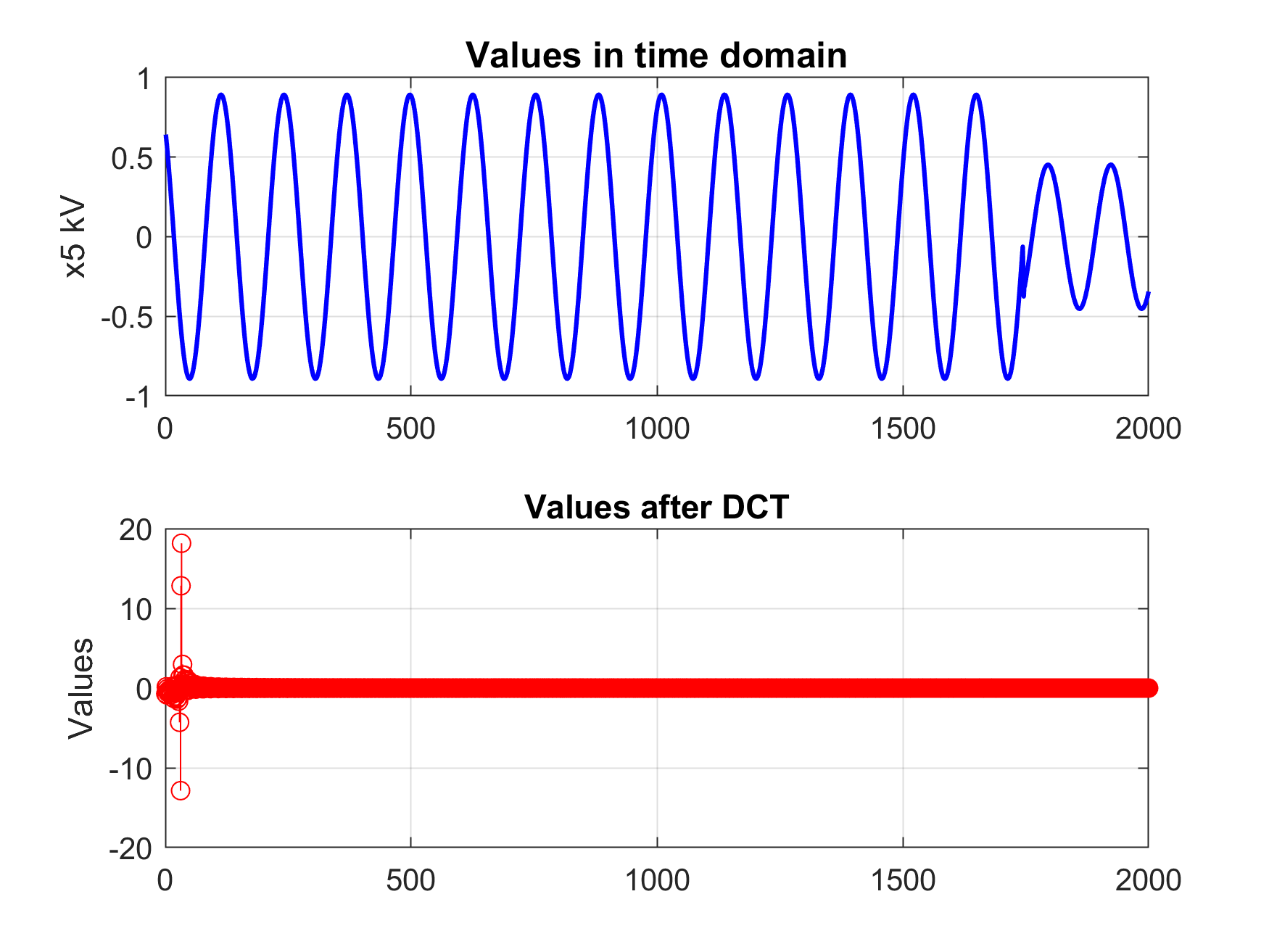}
\caption{Electric voltage signal and its sparsity (for illustration purpose, only the first 2000 samples of the signal are shown here).}
\label{fig:voltage_sparsity}
\end{figure}

\begin{table}[H]
\centering
\caption{CPU time and number of iterations required for each algorithm, averaged over 30 runs on electric voltage signal dataset}
\label{tab: CPU_time_iter_vol}
\setlength{\tabcolsep}{1pt}
\begin{tabular}{cccccc|ccccc}
\hline
\multicolumn{1}{l}{} & \multicolumn{5}{c|}{CPU time (in seconds)}    & \multicolumn{5}{c}{Number of iterations}     \\ \hline
Case                 & ADMM  & DR-DC & pDCAe & HBADMM & \textbf{BDR} & ADMM & DR-DC & pDCAe & HBADMM & \textbf{BDR} \\ \hline
1                    & 0.12  & 0.07  & 0.03  & 0.07   & \textbf{0.02}         & 594  & 347   & 78    & \textbf{59}     & 92           \\
2                    & 0.83  & 0.42  & 0.23  & 0.40   & \textbf{0.14}         & 560  & 328   & 76    & \textbf{69}     & 87           \\
3                    & 1.93  & 1.08  & 0.48  & 0.94   & \textbf{0.29}         & 533  & 312   & 74    & \textbf{72}     & 84           \\
4                    & 3.30  & 1.90  & 0.86  & 1.74   & \textbf{0.51}         & 517  & 303   & \textbf{73}    & 75     & 81           \\
5                    & 5.00  & 2.90  & 1.24  & 3.72   & \textbf{0.77}         & 497  & 292   & \textbf{71}    & 103    & 78           \\
6                    & 7.02  & 4.09  & 1.78  & 3.93   & \textbf{1.10}         & 489  & 286   & \textbf{69}  & 77     & 77           \\
7                    & 9.37  & 5.61  & 2.58  & 6.97   & \textbf{1.62}         & 477  & 280   & \textbf{68}    & 91     & 75           \\
8                    & 13.14 & 7.70  & 3.64  & 4.72   & \textbf{1.96}         & 474  & 278   & 68    & \textbf{50}     & 75           \\
9                    & 15.48 & 9.05  & 3.89  & 6.45   & \textbf{2.43}         & 464  & 272   & 67    & \textbf{56}    & 73           \\
10                   & 21.18 & 12.86 & 5.82  & 8.99   & \textbf{3.48}         & 456  & 267   & 64    & \textbf{57}     & 72           \\ \hline
11                   & 0.12  & 0.07  & 0.03  & 0.07   & \textbf{0.02}         & 582  & 340   & 74    & \textbf{62}     & 90           \\
12                   & 0.76  & 0.42  & 0.22  & 0.38   & \textbf{0.12}         & 580  & 340   & 78    & \textbf{69}     & 91           \\
13                   & 1.82  & 1.04  & 0.47  & 0.80   & \textbf{0.29}         & 521  & 305   & 71    &\textbf{} \textbf{62}     & 82           \\
14                   & 3.19  & 1.85  & 0.84  & 2.14   & \textbf{0.51}         & 503  & 295   & \textbf{71}    & 92     & 80           \\
15                   & 5.11  & 3.00  & 2.05  & 2.45   & \textbf{1.35}         & 509  & 299   & 116   & \textbf{68}     & 135          \\
16                   & 7.05  & 4.15  & 1.83  & 4.28   & \textbf{1.11}         & 492  & 288   & \textbf{70}    & 84     & 77           \\
17                   & 10.03 & 5.87  & 2.38  & 4.76   & \textbf{1.58}         & 481  & 283   & 67    & \textbf{64}     & 76           \\
18                   & 12.21 & 7.22  & 3.28  & 5.20   & \textbf{1.91}         & 469  & 276   & 67    & \textbf{56}     & 74           \\
19                   & 16.80 & 9.66  & 4.19  & 7.15   & \textbf{2.64}         & 481  & 283   & 69    & \textbf{60}     & 76           \\
20                   & 19.29 & 11.34 & 4.94  & 9.84   & \textbf{3.00}         & 465  & 272   & \textbf{66}    & 68     & 73           \\ \hline
\end{tabular}
\end{table}

\begin{table}[H]
\centering
\caption{Error vs ground truth, averaged over 30 runs on electric voltage signal dataset}
\label{tab:errorvsground_voltage}
\begin{tabular}{c|rrrrr}
\hline
Case & \multicolumn{1}{c}{ADMM} & \multicolumn{1}{c}{DR-DC} & \multicolumn{1}{c}{pDCAe} & \multicolumn{1}{c}{HBADMM} & \multicolumn{1}{c}{\textbf{BDR}} \\ \hline
1    & 7.60E-02                 & 7.60E-02                  & 7.60E-02                  & 7.60E-02                   & 7.60E-02                         \\
2    & 9.69E-02                 & 9.69E-02                  & 9.69E-02                  & 9.69E-02                   & 9.69E-02                         \\
3    & 1.03E-01                 & 1.03E-01                  & 1.03E-01                  & 1.03E-01                   & 1.03E-01                         \\
4    & 1.11E-01                 & 1.11E-01                  & 1.11E-01                  & 1.11E-01                   & 1.11E-01                         \\
5    & 1.11E-01                 & 1.11E-01                  & 1.11E-01                  & 1.11E-01                   & 1.11E-01                         \\
6    & 1.10E-01                 & 1.10E-01                  & 1.10E-01                  & 1.10E-01                   & 1.10E-01                         \\
7    & 1.11E-01                 & 1.11E-01                  & 1.11E-01                  & 1.11E-01                   & 1.11E-01                         \\
8    & 1.12E-01                 & 1.12E-01                  & 1.12E-01                  & 1.12E-01                   & 1.12E-01                         \\
9    & 1.10E-01                 & 1.10E-01                  & 1.10E-01                  & 1.10E-01                   & 1.10E-01                         \\
10   & 1.09E-01                 & 1.09E-01                  & 1.09E-01                  & 1.09E-01                   & 1.09E-01                         \\ \hline
11   & 8.05E-02                 & 8.04E-02                  & 8.04E-02                  & 8.04E-02                   & 8.04E-02                         \\
12   & 9.36E-02                 & 9.36E-02                  & 9.36E-02                  & 9.36E-02                   & 9.36E-02                         \\
13   & 1.05E-01                 & 1.04E-01                  & 1.04E-01                  & 1.04E-01                   & 1.04E-01                         \\
14   & 1.18E-01                 & 1.18E-01                  & 1.18E-01                  & 1.18E-01                   & 1.18E-01                         \\
15   & 1.09E-01                 & 1.09E-01                  & 1.09E-01                  & 1.09E-01                   & 1.09E-01                         \\
16   & 1.13E-01                 & 1.13E-01                  & 1.13E-01                  & 1.13E-01                   & 1.13E-01                         \\
17   & 1.12E-01                 & 1.12E-01                  & 1.12E-01                  & 1.12E-01                   & 1.12E-01                         \\
18   & 1.12E-01                 & 1.12E-01                  & 1.12E-01                  & 1.12E-01                   & 1.12E-01                         \\
19   & 1.11E-01                 & 1.11E-01                  & 1.11E-01                  & 1.11E-01                   & 1.11E-01                         \\
20   & 1.08E-01                 & 1.08E-01                  & 1.08E-01                  & 1.08E-01                   & 1.08E-01                         \\ \hline
\end{tabular}
\end{table}

\subsection{Reconstruction of signals from incomplete and noisy signals}\label{subsec:profile_reconstruction}
In this experiment, we aim to reconstruct a signal from a random set of its noisy measurements, taken into account the fact that the signal is sparse in a specific domain. The model \eqref{ex:CSM} can be used for this task. For example, suppose that we have a signal $\mathbf{u}=[u_1,u_2,u_3,u_4,u_5]^\top$, which is sparse after applying DCT, and suppose that $u_2$, $u_3$, and $u_5$ are missing (in other words, we only know the values of $u_1$ and $u_4$). We can utilize the model \eqref{ex:CSM} to recover the signal $\mathbf{u}$ with
\begin{align*}
    A=S\Psi,~\text{where~} S=
    \begin{bmatrix}
1 & 0 & 0 & 0 & 0 \\
0 & 0 & 0 & 1 & 0 
\end{bmatrix},
\end{align*}
$\Psi$ is the inverse DCT matrix, $b=[u_1, u_4]^\top$, and $x \in \mathbb{R}^5$ is the sparse vector that we want to find. After $x$ is found, the reconstructed signal is obtained by $\Psi x$. In practice, the length of the signal $\mathbf{u}$ is known, and the indices of its missing entries are also known, hence $S$ can be easily constructed.

Using this idea, we first test our algorithm on the electric voltage signal data from Section~\ref{subsec:sparse_recon_random_load}, which is a generally smooth signal. A signal of length $d$ is sampled and Gaussian noise, which typically represents the measurement error in a practical power system \cite{Nejati2012}, is added to it to form a noisy signal. Then $\mathcal{R}$ percent of the elements of the noisy signal is sampled to form the vector $b$. The matrix $S$ is constructed based on the indices of those sampled entries, and $A=S\Psi$ with $\Psi$ being the inverse DCT matrix. We then solve the problem \eqref{ex:CSM} to obtain the reconstructed signal $\mathbf{\widehat{u}}$. Figure~\ref{fig:plot_recon} illustrates an example of this process (here, $40\%$ of the entries are sampled).

\begin{figure}[H]
\centering
\includegraphics[width=0.6\linewidth]{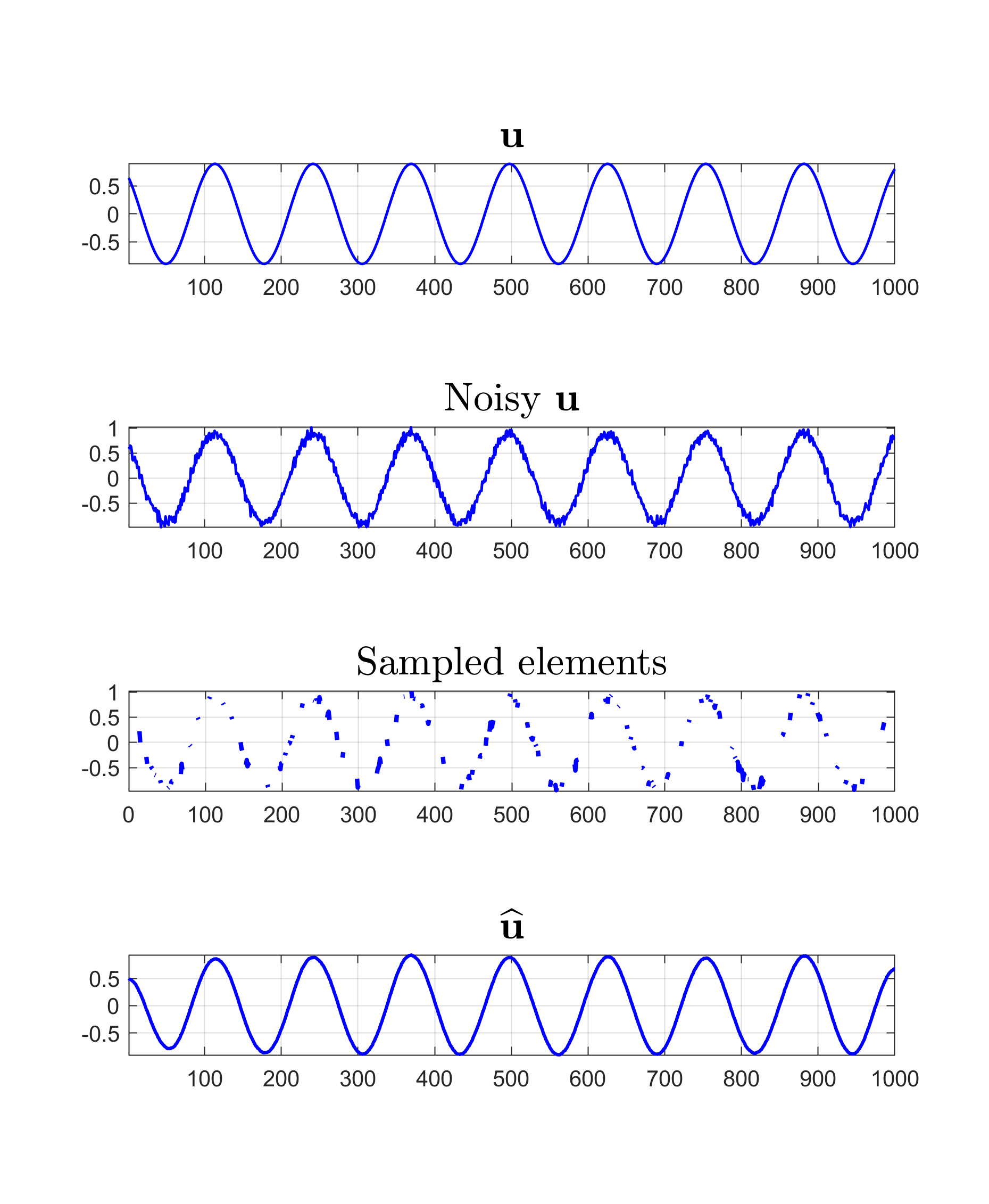}
\caption{An illustration of the case study.}
\label{fig:plot_recon}
\end{figure}
We employ the heuristic in \cite[Remark 4]{Li2015DR} to set the parameter $\gamma$. Firstly, we set $\gamma = k \gamma_0$ and update $\gamma=\max \{\gamma/2,0.9999\gamma_0\}$ whenever $\gamma>\gamma_0$ and the sequence $x_n$ satisfies $\|x_{n+1}-x_n\|>1000/n$ or $\|x_n\|>10^{10}$. We also choose $\eta=1.4$. Then in the view of Theorem~\ref{theorem:sub_convergence}, $\overline{\gamma}\approx 0.547$, and thus, we set $\gamma_0 =0.447$ for our algorithm and let $k=10$. We compare the BDR with the other algorithms for 30 runs, with different sampling rate $\mathcal{R}$ and signal length values. In each run, $\mathcal{R}$ percent of the entries of the signal are randomly sampled. All algorithms start at the origin, the maximum number of iterations is 3000, $\lambda=0.1$, and the stopping criteria is the same as in Section~\ref{subsec:sparse_recon_random_load}. For evaluating the solution quality, we use the following criteria
\begin{align*}
    \text{Error vs ground truth~}\frac{\|\mathbf{u}-\mathbf{\widehat{u}}\|}{\|\mathbf{u}\|}, \text{~and Signal-to-noise ratio (SNR)}:=20\log_{10}\frac{\|\mathbf{u}\|}{\|\mathbf{u}-\mathbf{\widehat{u}}\|}  \cite{Tu2019}.
\end{align*}
The first criterion measures the closeness of the solution to the ground truth while the second one evaluates the quality of the denoised signal. A higher SNR value means a higher denoising quality. Tables~\ref{tab:results_recon_voltage_signal_time_iter}\&\ref{tab:results_recon_voltage_signal_err} show that the BDR converges significantly faster than other algorithms. In terms of solution quality, the error vs ground truth of the BDR is on par with the ones obtained by the other algorithms, while its SNR is even slightly higher than the SNR obtained by the ADMM and DR-DC. The performance of the BDR is consistent on different signal lengths and sampling ratios.

\begin{table}[H]
\centering
\caption{Average CPU time and number of iterations over 30 runs, electric voltage signal reconstruction}
\label{tab:results_recon_voltage_signal_time_iter}
\setlength{\tabcolsep}{1pt}
\begin{tabular}{c|c|ccccc|ccccc}
\hline
\multicolumn{1}{l|}{}  & \multicolumn{1}{l|}{} & \multicolumn{5}{c|}{CPU time (in seconds)}                     & \multicolumn{5}{c}{Number of iterations}               \\ \hline
Length          & $\mathcal{R}$         & ADMM  & DR-DC & pDCAe & HBADMM & \textbf{BDR} & ADMM & DR-DC & pDCAe & HBADMM & \textbf{BDR} \\ \hline
\multirow{3}{*}{2000}  & 20\%                  & 0.32  & 0.19  & 0.17  & 0.21   & \textbf{0.02}         & 956  & 562   & 121   & 93     & \textbf{60}           \\
                       & 30\%                  & 0.67  & 0.38  & 0.10  & 0.29   & \textbf{0.07}         & 413  & 242   & 55    & 52     & \textbf{41}           \\
                       & 40\%                  & 0.72  & 0.41  & 0.10  & 0.30   & \textbf{0.08}         & 366  & 215   & 50    & 49     & \textbf{39}           \\ \hline
\multirow{3}{*}{5000}  & 20\%                  & 2.99  & 1.69  & 1.03  & 2.12   & \textbf{0.22}         & 791  & 467   & 109   & 132    & \textbf{58}           \\
                       & 30\%                  & 3.49  & 2.03  & 0.83  & 2.48   & \textbf{0.32}         & 530  & 312   & 74    & 107    & \textbf{49}           \\
                       & 40\%                  & 4.10  & 2.36  & 0.72  & 2.17   & \textbf{0.44}         & 410  & 241   & 56    & 70     & \textbf{43}           \\ \hline
\multirow{3}{*}{10000} & 20\%                  & 11.25 & 6.59  & 3.78  & 7.73   & \textbf{0.89}         & 731  & 432   & 103   & 125    & \textbf{57}           \\
                       & 30\%                  & 13.11 & 8.21  & 3.27  & 9.47   & \textbf{1.48}         & 503  & 296   & 70    & 102    & \textbf{49}           \\
                       & 40\%                  & 15.24 & 8.93  & 2.61  & 11.40  & \textbf{1.66}         & 387  & 227   & 54    & 94     & \textbf{42}           \\ \hline
\end{tabular}
\end{table}

\begin{table}[H]
\centering
\caption{Average error vs ground truth and SNR over 30 runs, electric voltage signal reconstruction}
\label{tab:results_recon_voltage_signal_err}
\setlength{\tabcolsep}{1pt}
\begin{tabular}{c|c|ccccc|ccccc}
\hline
\multicolumn{1}{l|}{}  & \multicolumn{1}{l|}{} & \multicolumn{5}{c|}{Error vs ground truth}               & \multicolumn{5}{c}{SNR}                          \\ \hline
Length          & $\mathcal{R}$         & ADMM     & DR-DC    & pDCAe    & HBADMM   & \textbf{BDR} & ADMM   & DR-DC  & pDCAe  & HBADMM & \textbf{BDR} \\ \hline
\multirow{3}{*}{2000}  & 20\%                  & 1.27E-01 & 1.27E-01 & 1.27E-01 & 1.27E-01 & 1.27E-01     & 17.957 & 17.958 & 17.959 & 17.959 & 17.959       \\
                       & 30\%                  & 7.10E-02 & 7.10E-02 & 7.10E-02 & 7.10E-02 & 7.10E-02     & 22.974 & 22.974 & 22.974 & 22.975 & 22.975       \\
                       & 40\%                  & 7.16E-02 & 7.16E-02 & 7.16E-02 & 7.16E-02 & 7.16E-02     & 22.919 & 22.920 & 22.920 & 22.920 & 22.920       \\ \hline
\multirow{3}{*}{5000}  & 20\%                  & 1.42E-01 & 1.42E-01 & 1.42E-01 & 1.42E-01 & 1.42E-01     & 16.959 & 16.960 & 16.960 & 16.960 & 16.960       \\
                       & 30\%                  & 1.05E-01 & 1.05E-01 & 1.05E-01 & 1.05E-01 & 1.05E-01     & 19.616 & 19.617 & 19.617 & 19.617 & 19.617       \\
                       & 40\%                  & 8.76E-02 & 8.76E-02 & 8.76E-02 & 8.76E-02 & 8.76E-02     & 21.156 & 21.157 & 21.157 & 21.157 & 21.157       \\ \hline
\multirow{3}{*}{10000} & 20\%                  & 1.46E-01 & 1.46E-01 & 1.46E-01 & 1.46E-01 & 1.46E-01     & 16.715 & 16.716 & 16.716 & 16.716 & 16.716       \\
                       & 30\%                  & 1.11E-01 & 1.11E-01 & 1.11E-01 & 1.11E-01 & 1.11E-01     & 19.106 & 19.107 & 19.107 & 19.107 & 19.107       \\
                       & 40\%                  & 9.26E-02 & 9.26E-02 & 9.26E-02 & 9.26E-02 & 9.26E-02     & 20.673 & 20.673 & 20.673 & 20.674 & 20.674       \\ \hline
\end{tabular}
\end{table}

Next, we test the BDR on another public dataset which has nonsmooth behavior. The dataset is taken from the ``2023 Distribution zone substation data'', published by Ausgrid\footnote{The dataset can be found at \url{https://www.ausgrid.com.au/Industry/Our-Research/Data-to-share/Distribution-zone-substation-data}.}. We choose the data file named ``Beacon Hill 33 11kV FY2023'' for this experiment. This real-world dataset contains the load data of a practical power system in Victoria, Australia for one year, measured at a 15-minute time step. One interesting observation is that the load data becomes sparse after we perform DCT on it, with most of its entries being close to zero. This can be seen in Figure~\ref{fig:load_plot_sparsity}, where 100 samples of the original data versus the sparse values are shown. We repeat the same experiment of the electric voltage signal and report the obtained results in Tables~\ref{tab:results_recon_load_time_iter}\&\ref{tab:results_recon_load_err}. From these two tables, it is observed that the solution quality of the BDR (in terms of the error and SNR criteria) is the same as the solution quality of the pDCAe and HBADMM algorithms and better than the solution quality of the ADMM and the DR-DC algorithms. However, the main advantage of the BDR is in its computational efficiency. It is seen that the BDR converges much faster with a significantly lower CPU time than all other methods and in all test conditions (i.e., all signal length and sampling ratio values) in Tables~\ref{tab:results_recon_load_time_iter}\&\ref{tab:results_recon_load_err}. This computational efficiency is a key issue for power system operators. In severe contingencies (such as when a wildfire happens), some of load data may be lost (for instance, due to the disconnection of some communication lines) and thus just a percentage of load data is available similar to the test cases considered in this numerical experiment. The shorter response time of the BDR means that power systems operators can access faster the reconstructed load profile data and thus can more effectively decide to cope with the severe conditions and supply the customers.  
\begin{figure}[H]
\centering
\includegraphics[width=0.5\linewidth]{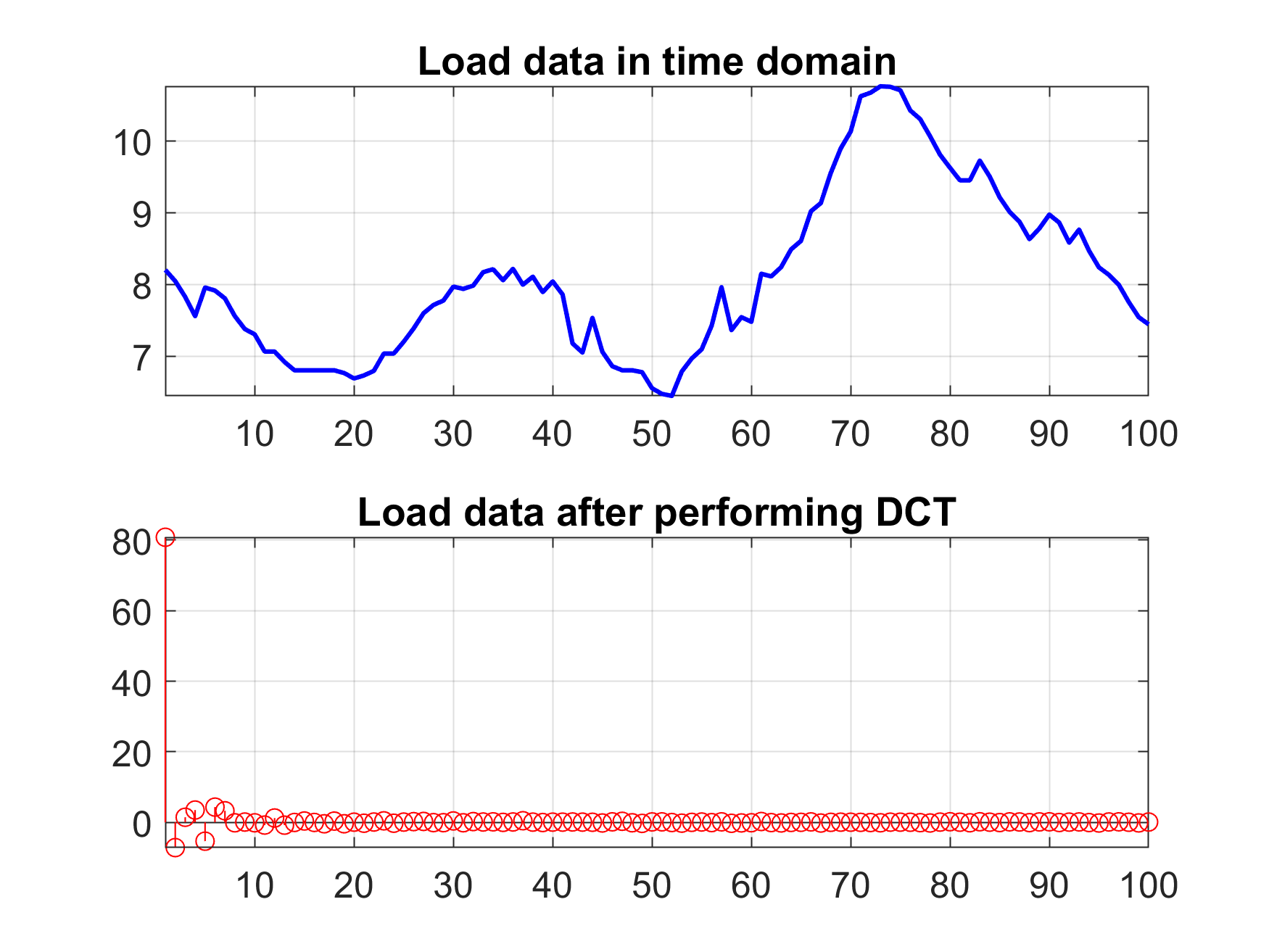}
\caption{Load data and its sparsity.}
\label{fig:load_plot_sparsity}
\end{figure}

\begin{table}[H]
\centering
\caption{Average CPU time and number of iterations over 30 runs, load reconstruction}
\label{tab:results_recon_load_time_iter}
\setlength{\tabcolsep}{1pt}
\begin{tabular}{c|c|ccccc|ccccc}
\hline
\multicolumn{1}{l|}{}  & \multicolumn{1}{l|}{} & \multicolumn{5}{c|}{CPU time (in seconds)}                     & \multicolumn{5}{c}{Number of iterations}               \\ \hline
Length          & $\mathcal{R}$         & ADMM  & DR-DC & pDCAe & HBADMM & \textbf{BDR} & ADMM & DR-DC & pDCAe & HBADMM & \textbf{BDR} \\ \hline
\textbf{}\multirow{3}{*}{2000}  & 20\%      & 1.02   & 1.00   & 0.61  & 0.69   & \textbf{0.12}         & 3000 & 3000  & 385   & \textbf{295}    & 317          \\
                       & 30\%      & 1.95   & 2.04   & 0.48  & 0.91   & \textbf{0.11}         & 3000 & 3000  & 290   & 271    & \textbf{165}          \\
                       & 40\%      & 4.31   & 3.57   & 0.53  & 1.58   & \textbf{0.13}         & 3000 & 2505  & 253   & 295    & \textbf{92}           \\ \hline
\multirow{3}{*}{5000}  & 20\%      & 13.21  & 12.78  & 4.56  & 6.96   & \textbf{1.52}         & 3000 & 3000  & 401   & 391    & \textbf{379}          \\
                       & 30\%      & 20.46  & 20.07  & 3.82  & 9.81   & \textbf{1.36}         & 3000 & 3000  & 331   & 401    & \textbf{198}          \\
                       & 40\%      & 30.53  & 29.31  & 3.49  & 12.72  & \textbf{1.20}         & 3000 & 2839  & 273   & 401    & \textbf{115}          \\ \hline
\multirow{3}{*}{10000} & 20\%      & 49.20  & 46.89  & 15.26 & 25.42  & \textbf{5.16}         & 3000 & 3000  & 399   & 401    & \textbf{329}          \\
                       & 30\%      & 81.74  & 83.73  & 16.79 & 39.81  & \textbf{5.24}         & 3000 & 3000  & 316   & 401    & \textbf{171}          \\
                       & 40\%      & 135.40 & 114.60 & 14.80 & 52.43  & \textbf{4.53}         & 3000 & 2690  & 260   & 397    & \textbf{105}          \\ \hline
\end{tabular}
\end{table}

\begin{table}[H]
\centering
\caption{Average error vs ground truth and SNR over 30 runs, load reconstruction}
\label{tab:results_recon_load_err}
\setlength{\tabcolsep}{1.6pt}
\begin{tabular}{c|c|ccccc|ccccc}
\hline
\multicolumn{1}{l|}{}  & \multicolumn{1}{l|}{} & \multicolumn{5}{c|}{Error vs ground truth}               & \multicolumn{5}{c}{SNR}                          \\ \hline
Length          & $\mathcal{R}$         & ADMM     & DR-DC    & pDCAe    & HBADMM   & \textbf{BDR} & ADMM   & DR-DC  & pDCAe  & HBADMM & \textbf{BDR} \\ \hline
\multirow{3}{*}{2000}  & 20\%      & 2.67E-01 & 6.36E-02 & 5.34E-02 & 5.34E-02 & 5.34E-02     & 11.466 & 23.931 & 25.461 & 25.461 & 25.461       \\
                       & 30\%      & 8.36E-02 & 3.97E-02 & 3.95E-02 & 3.95E-02 & 3.95E-02     & 21.607 & 28.026 & 28.071 & 28.071 & 28.071       \\
                       & 40\%      & 3.08E-02 & 3.03E-02 & 3.03E-02 & 3.03E-02 & 3.03E-02     & 30.227 & 30.384 & 30.386 & 30.387 & 30.387       \\ \hline
\multirow{3}{*}{5000}  & 20\%      & 3.13E-01 & 8.07E-02 & 6.20E-02 & 6.20E-02 & 6.20E-02     & 10.080 & 21.870 & 24.158 & 24.158 & 24.158       \\
                       & 30\%      & 1.22E-01 & 4.37E-02 & 4.33E-02 & 4.33E-02 & 4.33E-02     & 18.273 & 27.195 & 27.269 & 27.269 & 27.269       \\
                       & 40\%      & 3.64E-02 & 3.36E-02 & 3.36E-02 & 3.36E-02 & 3.36E-02     & 28.773 & 29.474 & 29.475 & 29.475 & 29.475       \\ \hline
\multirow{3}{*}{10000} & 20\%      & 3.00E-01 & 7.06E-02 & 5.59E-02 & 5.59E-02 & 5.59E-02     & 10.470 & 23.047 & 25.058 & 25.058 & 25.058       \\
                       & 30\%      & 1.01E-01 & 4.08E-02 & 4.06E-02 & 4.06E-02 & 4.06E-02     & 19.884 & 27.799 & 27.831 & 27.831 & 27.831       \\
                       & 40\%      & 3.31E-02 & 3.16E-02 & 3.16E-02 & 3.16E-02 & 3.16E-02     & 29.604 & 30.006 & 30.007 & 30.007 & 30.007       \\ \hline
\end{tabular}
\end{table}

{\color{black}
\subsection{An analytical example of \eqref{eq:P} with nonconvex $h$}

In this subsection, we evaluate the BDR on a test problem taking the form of \eqref{eq:P}, in which $h$ is nonconvex. Consider the following modified version of \eqref{ex:CSM},
\begin{align}\label{ex:CSM_nonconvex}
    \min_{x\in \mathbb{R}^{d}}~~\frac{1}{2}\|Ax-b\|^2 + \lambda (\|x\|_{1,\mathcal{T}}-\|x\|), \tag{CS-2M}
\end{align}
where $\|x\|_{1,\mathcal{T}}:=\sum_{i=1}^{d}\min(|x_i|,\mathcal{T})$ is the \textit{capped-$\ell_1$} norm \cite{capped_L1}, which is nonconvex. Obviously, when $\mathcal{T}\to +\infty$, the capped-$\ell_1$ becomes the $\ell_1$ norm. Another observation is that the level curves of the $\|x\|_{1,\mathcal{T}}-\|x\|$ have approximately similar patterns to those of the $\|x\|_{1}-\|x\|$, as shown in Figure~\ref{fig:capped_L1}. Therefore, this regularization term can also promote sparsity when $\mathcal{T}$ is set relatively high.
\begin{figure}[H]
\centering
\includegraphics[width=0.8\linewidth]{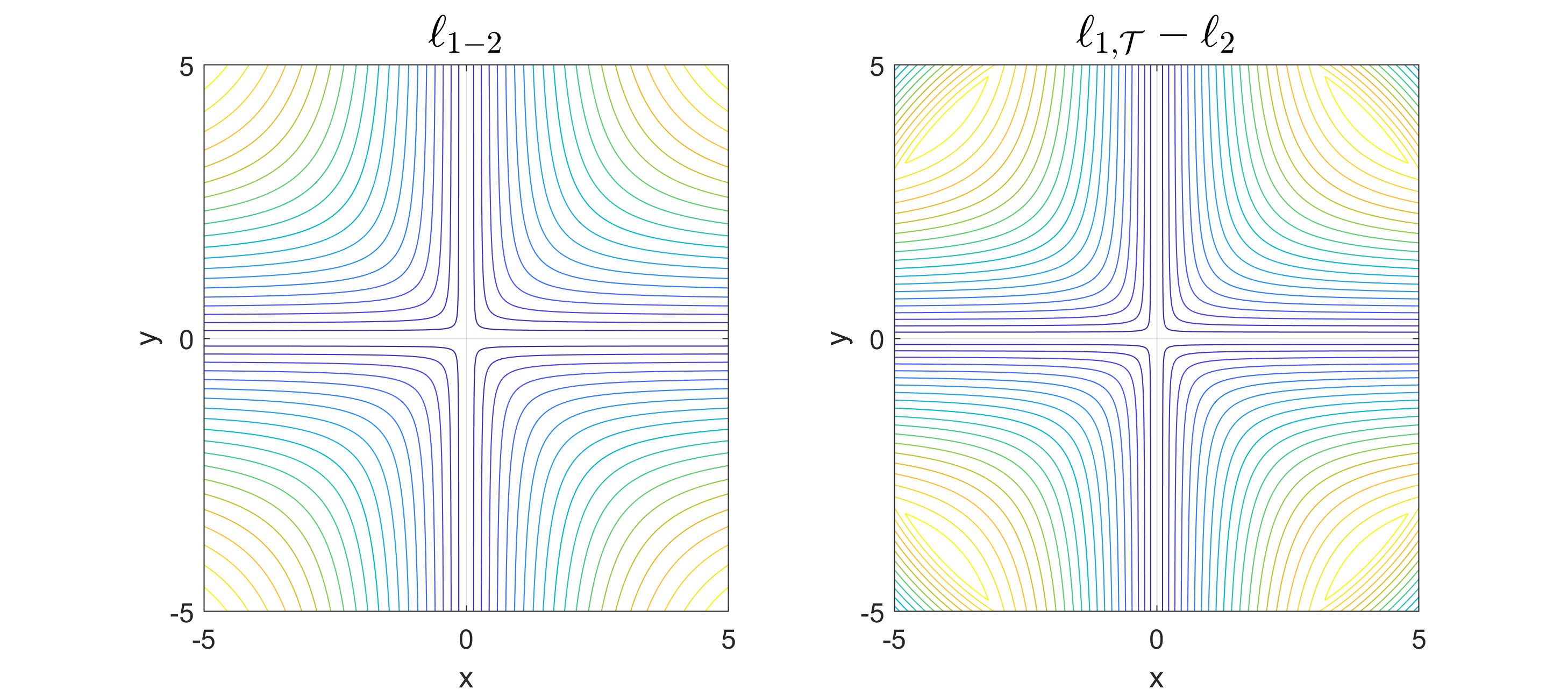}
\caption{Contour lines of $\ell_{1-2}$ vs $\ell_{1,\mathcal{T}}-\ell_2$.}
\label{fig:capped_L1}
\end{figure}
The BDR can be used to solve \eqref{ex:CSM_nonconvex} with the same splitting scheme as introduced in Section~\ref{sec:numerical_result}, now with $h(x)=\lambda\|x\|_{1,\mathcal{T}}$. The closed-form proximal operator for $h$ is given by\footnote{\url{https://proximity-operator.net}}, for $i=1, \dots,d$,
\begin{align*}
(\prox_{\gamma h}(u))_i = \begin{cases}
u_i1_{\left\{|u_i|>\sqrt{2\gamma\lambda\mathcal{T}}\right\}} + \{0,u_i\}1_{\left\{|u_i|=\sqrt{2\gamma\lambda\mathcal{T}}\right\}} \quad &\text{if~} \gamma\lambda/\mathcal{T}\geq 2, \\
\text{sign}(u_i)(|u_i|-\gamma\lambda)1_{\left\{\gamma\lambda < |u_i| < \mathcal{T}+\frac{\gamma\lambda}{2\mathcal{T}}\right\}} + u_i1_{\left\{ |u_i| \geq \mathcal{T} + \frac{\gamma\lambda}{2\mathcal{T}} \right\}} &\text{if~} \gamma\lambda/\mathcal{T} < 2,
\end{cases}
\end{align*}
where $1_{\{x\in C\}} = 1$ if $x\in C$, and $1_{\{x\in C\}} = 0$ otherwise. Again, we generate $A$, $b$, and the ground truth sparse vector by following the procedure in \cite{Lou2017}, and consider different matrix sizes from Table~\ref{tab:caseCS}.  We set $\lambda=0.1$, $\mathcal{T}=100$, and run all algorithms, initialized at the origin, for 30 times with a maximum of 3000 iterations. The stopping condition for all algorithms the same as in previous sections. For the BDR, we also choose the parameters as in Section~\ref{subsec:profile_reconstruction}, with $k=2$. We compare the BDR with the \textit{proximal subgradient algorithm with extrapolation} (PSAE) \cite{Tan2023}, which is capable of tackling \eqref{eq:P} with nonconvex $h$, and its parameters are set to the same ones used in \cite{Tan2023}. Table~\ref{tab: result_nonconvex} shows that the BDR consistently outperforms the PSAE in this nonconvex case study.

\begin{table}[H]
\centering
\caption{Average results over 30 random instances}
\label{tab: result_nonconvex}
\begin{tabular}{c|cc|cc|cc}
\hline
     & \multicolumn{2}{c|}{CPU time (in seconds)} & \multicolumn{2}{c|}{Number of iterations} & \multicolumn{2}{c}{Error vs ground truth} \\ \hline
Case & PSAE            & \textbf{BDR}            & PSAE            & \textbf{BDR}           & PSAE           & \textbf{BDR}             \\ \hline
1    & 0.03             & \textbf{0.01}           & 96               & \textbf{67}            & 7.72E-06        & \textbf{4.96E-06}        \\
2    & 0.17             & \textbf{0.09}           & 92               & \textbf{64}            & 7.45E-06        & \textbf{4.58E-06}        \\
3    & 0.41             & \textbf{0.24}           & 92               & \textbf{64}            & 7.43E-06        & \textbf{4.68E-06}        \\
4    & 0.80             & \textbf{0.47}           & 93               & \textbf{64}            & 7.32E-06        & \textbf{4.63E-06}        \\
5    & 1.20             & \textbf{0.74}           & 92               & \textbf{63}            & 7.31E-06        & \textbf{4.64E-06}        \\
6    & 1.64             & \textbf{1.00}           & 92               & \textbf{64}            & 7.40E-06        & \textbf{4.70E-06}        \\
7    & 2.30             & \textbf{1.47}           & 92               & \textbf{64}            & 7.27E-06        & \textbf{4.52E-06}        \\
8    & 2.44             & \textbf{1.53}           & 92               & \textbf{64}            & 7.39E-06        & \textbf{4.56E-06}        \\ 
9    & 2.95             & \textbf{1.97}           & 91               & \textbf{63}            & 7.27E-06        & \textbf{4.58E-06}        \\
10   & 3.68             & \textbf{2.43}           & 92               & \textbf{63}            & 7.21E-06        & \textbf{4.60E-06}        \\ \hline
11   & 0.03             & \textbf{0.01}           & 93               & \textbf{65}            & 7.67E-06        & \textbf{4.81E-06}        \\
12   & 0.19             & \textbf{0.11}           & 91               & \textbf{63}            & 7.33E-06        & \textbf{4.64E-06}        \\
13   & 0.49             & \textbf{0.40}           & 110              & \textbf{103}           & 9.21E-06        & \textbf{9.00E-06}        \\
14   & 0.68             & \textbf{0.42}           & 92               & \textbf{64}            & 7.41E-06        & \textbf{4.63E-06}        \\
15   & 1.12             & \textbf{0.83}           & 102              & \textbf{81}            & 8.49E-06        & \textbf{6.57E-06}        \\
16   & 1.67             & \textbf{1.07}           & 96               & \textbf{68}            & 7.82E-06        & \textbf{5.02E-06}        \\
17   & 2.19             & \textbf{1.36}           & 91               & \textbf{63}            & 7.17E-06        & \textbf{4.44E-06}        \\
18   & 3.09             & \textbf{2.02}           & 99               & \textbf{74}            & 8.02E-06        & \textbf{5.59E-06}        \\
19   & 3.06             & \textbf{2.05}           & 91               & \textbf{62}            & 7.14E-06        & \textbf{4.47E-06}        \\
20   & 5.93             & \textbf{4.85}           & 117              & \textbf{105}           & 9.83E-06        & \textbf{9.17E-06}        \\ \hline
\end{tabular}
\end{table}

}
\section{Conclusion}
\label{sec:conclusion}

We have proposed a proximal splitting algorithm to solve a class of nonconvex and nonsmooth problems, including the well-known DC program. This algorithm uses proximal steps to evaluate the concave part of the objective function rather than relying on subgradients. We have proved that the sequence of iterates generated by the proposed algorithm is bounded and any of its cluster points yields a {\color{black}critical point} of the problem model. We have also established the global convergence of the whole sequence and derived the convergence rates of the iterates and the objective function value, without requiring differentiability of the concave part. Intensive numerical experiments on both synthetic data and real data shows the superiority of our algorithm. The experiments on electric voltage signals and load data provide a concept for developing a fast data imputation framework for time series data in power systems. The higher computation efficiency of the proposed algorithm can help power system operators to access more rapidly the reconstructed data and therefore better manage their systems. In the future work, we will consider multivariate time series and the spatial relationships between the variables in the time series.

\section*{Declarations}

\noindent\textbf{Acknowledgements}. The authors want to thank Prof. Hongjin He for sharing the codes in \cite{Chuang2021}. We also want to thank Dr. Kai Tu and Dr. Emmanuel Soubies for the insightful discussions during the development of this manuscript. {\color{black}We are grateful to the anonymous reviewers for their constructive comments and suggestions which helped improve the manuscript.}

\noindent\textbf{Funding}. The research of TNP was supported by Henry Sutton PhD Scholarship Program from Federation University Australia. The research of MND was supported by Grant DP230101749 from the Australian Research Council (ARC) and benefited from the FMJH Program Gaspard Monge for optimization and operations research and their interactions with data science.

\noindent\textbf{Conflict of interest}. The authors declare no competing interests.

\noindent\textbf{Availability of data and materials}. All data generated or analyzed during this study are included in this article. The data for the numerical experiments were generated randomly and we explained explicitly how to generate them.

\noindent\textbf{Authors' contributions}. All authors contributed to the manuscript and approved the submitted version.


\let\oldbibliography\thebibliography
\renewcommand{\thebibliography}[1]{%
  \oldbibliography{#1}%
  \setlength{\itemsep}{-2pt}%
}
\bibliographystyle{plain}
\bibliography{references.bib}

\end{document}